\documentclass[1p]{elsarticle}
\usepackage[english]{babel}
\usepackage{amsmath}
\usepackage{amssymb}
\usepackage{mathptmx}     
\usepackage{latexsym}
\usepackage{relsize}
\usepackage{xcolor}
\usepackage{amsthm}
\usepackage{cancel}
\usepackage{hyperref}
\setlength{\textwidth} {150mm} \setlength{\textheight} {225mm}
\setlength{\headheight} {0mm} \setlength{\headsep} {0mm}
\setlength{\footskip} {15mm} \voffset = -.6in \hoffset = -0.75in

\newtheorem{teo}{Theorem}
\newtheorem{lem}{Lemma}
\newtheorem{rem}{Remark}
 
 \newcommand{\adots}
 {\mathinner{\mkern2mu\raise1pt\hbox{.}\mkern2mu\raise4pt\hbox{.}\mkern2mu\raise7pt\hbox{.}\mkern1mu}}
\title{On polynomial solutions of certain finite order\\ ordinary differential equations\tnoteref{t1,t2}}

\begin{document}

\begin{frontmatter}
	\author[1]{L.M. Anguas\fnref{fn1}}
	\ead{luismiguel.anguas@slu.edu}
	\affiliation[1]{organization={Saint Louis University, Madrid Campus},
		addressline={Avenida del Valle 34},
		postcode={28003},
		city={Madrid},
		country={Spain}}

	\author[2]{D. Barrios Rolan\'{\i}a\corref{cor1}\fnref{fn2}}
	\ead{dolores.barrios.rolania@upm.es}
	\affiliation[2]{organization={ETSI Industriales, Universidad Politecnica de Madrid},
		addressline={C/Jose Gutierrez Abascal 2},
		postcode={28006},
		city={Madrid},
		country={Spain}}
	
	\cortext[cor1]{Corresponding author}
	\fntext[fn1]{The work of this author was partially supported by Agencia Estatal de Investigaci\'on, Ministerio de Ciencia e Innovaci\'on, Spain,
		under grant PID2019-106362GB-I00.}
	\fntext[fn2]{The work of this author was partially supported by Agencia Estatal de Investigaci\'on, Ministerio de Ciencia e Innovaci\'on, Spain,
		under grant PID2021-122154NB-I00.}
		
\begin{abstract}
	Some properties and relations satisfied by the polynomial solutions of a bispectral problem are studied. Given a finite order differential operator, under certain restrictions, its polynomial eigenfunctions are explicitly obtained, as well as the corresponding eigenvalues. Also, some linear transformations are applied to sequences of eigenfunctions and a necessary condition for this to be a sequence of eigenfunctions of a new differential operator is obtained. These results are applied to the particular case of classical Hermite polynomials.
\end{abstract}

\begin{keyword}
	Differential operator\sep bispectral problem\sep polynomial eigenfunctions.
\MSC[2020]{15B99\sep 15A23\sep 34L10\sep 39A70\sep 33D45}
\end{keyword}	

		
\end{frontmatter}



\section{Introduction and main results}
We consider the ordinary differential operator of order $N$
\begin{equation}
L\equiv                
\sum_{i=0}^Na_i(x)\partial_x^i
\label{1}
\end{equation}
where 
$a_i(x)
$
are polynomials in the variable $x$,
$
\deg (a_i)\leq i,
$ and 
$
\partial^i_x,\, i=1,\ldots,N$, represents the derivative of order $i$ with respect to $x$. 
We also consider a sequence 
$
\{\lambda_n\}\subset \mathbb{C}
$
of eigenvalues and the corresponding sequence of eigenfunctions
$
\{P_n\},
$
which we assume that are monic polynomials with $\deg(P_n)=n$ for each $n\in \mathbb{N}$. That is,
\begin{equation}        
\sum_{i=1}^Na_i(x)\partial_x^iP_n(x)=\lambda_nP_n(x)\,,\quad \forall n\in \mathbb{N}.
\label{11}
\end{equation}
The polynomials $\{P_n\}$ satisfying \eqref{11} are called {\em eigenpolynomials} in this work. We approach the relations between the operator $L$ and its eigenvalues and eigenpolynomials. 

Although we study eigenpolynomials of \eqref{1} in general, we are especially interested in families of polynomials that are at the same time eigenfunctions of a certain difference operator $J$. One of such sequences  
$
\{P_n\}
$
of polynomials satisfy
\begin{equation}
\label{diferencias}
J
\left(
\begin{array}{c}
P_0(x)\\
P_1(x)\\
\vdots \\
\end{array}
\right)=
x\left(
\begin{array}{c}
P_0(x)\\
P_1(x)\\
\vdots \\
\end{array}
\right),
\end{equation}
where
\begin{equation}
\label{otro13}
J=
\left(
\begin{array}{rcccccccccc}
\alpha_{0,0}&1&0&\cdots  \\ 
 \vdots&\ddots&\ddots&\ddots  \\
 \alpha_{p,0}&\cdots & \alpha_{p,p}&1&0&\cdots \\ 
0&\alpha_{p+1,1}&\cdots&\alpha_{p+1,p+1}&1&0& \cdots\\ 
 & 0&\ddots&  \ddots&  \ddots& \ddots&\\
&&0&\alpha_{n,n-p}&\cdots&\alpha_{n,n}&1&0& \cdots\\ 
 &&&\ddots &\ddots&  \ddots& \ddots& \ddots& \\
 \end{array}
\right).
\end{equation}
Equation \eqref{diferencias} is satisfied if and only if those polynomials  satisfy a $(p+2)$-term recurrence relation
\begin{equation}
\label{otro9}
\displaystyle \sum_{k=n-p}^{n-1}\alpha_{n,k}P_k(x)+(\alpha_{n,n}-x)P_n(x)+P_{n+1} = 0\,,\quad n=0,1,\ldots ,
\end{equation}
with initial conditions 
$$
P_0=1,\quad P_{-1}=\cdots =P_{-p}=0.
$$
The difference operator is given by 
$$
J(n)P_n=\sum_{k=n-p}^{n}\alpha_{n,k}P_k+P_{n+1}
$$
and we have 
\begin{equation}
\label{discreto}
\left(J(n)P_n\right)(x)=xP_n(x).
\end{equation}
If the polynomials
$
P_n
$ 
satisfy \eqref{11} and \eqref{discreto}, we say that the sequence
$
\{P_n\},\,n\in\mathbb N,
$ 
is a solution for the bispectral problem defined by $L$ and $J$.

S. Bochner \cite{Bochner} studied the above problem for the case where the order of the differential operator is $N=2$ and determined the polynomial solutions of \eqref{11}. He completely solved the problem, and his classification defined the nowadays well-known families of classical orthogonal polynomials which correspond to $p=1$ in \eqref{otro9}. Some years later, Krall \cite{Krall1}, \cite{Krall} studied the differential operator of order $N=4$, giving a classification with seven families. This classification includes three new families of polynomial eigenfunctions that cannot be reduced to operators of order 2. Since the celebrated paper by Bochner and the relevant contributions by Krall, there have been several contributions about the bispectral problem, see, for example, \cite{Shapiro}, \cite{Paco}. Some extensions of these works have been attempted, even for operators in differences with complex coefficients \cite{Grunbaum1}, \cite{Grunbaum2}. However, the difficulty of the problem has made impossible to obtain conclusions as relevant as those already known for the operators orders $N=2$ and $N=4$. The goal of this work is to shed some light on this problem by providing some relationships between the differential operator \eqref{1} and its eigenpolynomials. We hope that these contributions open new possibilities for solving the problem with a general order $N$.

In what follows, we assume that the differential operator \eqref{1} is given; we also assume that there exist both the sequence of eigenvalues 
$\{\lambda_n\}$
and the corresponding sequence of eigenpolynomials
$
\{P_n\} 
$
satisfying \eqref{11} . We may assume $a_0\equiv 0$ because, otherwise, we would substitute $a_0$ by $a_0-\lambda_0$. For the same reason, we take $\lambda_0=0$. Also, we define 
\begin{equation}
a_n(x)=0,\quad n>N.
\label{aaa}
\end{equation}
Then, for each 
$n\in \mathbb{N}$
we write the polynomials $a_n(x)$ and $P_n(x)$ as
\begin{equation}
a_n(x)=\sum_{i=0}^n a_{n,i}x^i,\quad a_{n,i}\in \mathbb{C},\,i=0,1,\ldots, n,
\label{1111}
\end{equation}
and
\begin{equation}
P_n(x)=\sum_{i=0}^n b_{n,i}x^i,\quad b_{n,n}=1\,, b_{n,i}\in \mathbb{C},\,i=0,1,\ldots, n-1.
\label{111}
\end{equation}

A relevant tool in this paper is the sequence
$\{\delta_n^{(k)}\}$
defined from the coefficients of the polynomials 
$a_i(x),\,i=1,\ldots,N,$
as
\begin{equation}
\label{deltas_aes}
\delta_n^{(k)}=\sum_{i=k}^{n} {n \choose i} i!a_{i,i-k}, \quad  k=0,1,\ldots, n\,.
\end{equation}

The following theorem is the key to understand the connection between these sequences \eqref{deltas_aes} and the coefficients of the eigenpolynomials. We remark that, in addition, this theorem provides a valuable method to obtain the coefficients $b_{n,k},\,k=0,1,\ldots ,n-1,$ from the polynomials $P_n(x)$ (see \eqref{111}).

\begin{teo}
\label{lema1}
For the sequence 
$
\{P_n(x)\}
$
of eigenpolynomials and the sequence 
$
\{\lambda_n\}
$
of eigenvalues of $L$ we have
\begin{enumerate}
\item For each $n\in \mathbb{N}$, \eqref{11} is equivalent to
\begin{equation}
\sum_{k=0}^N\delta_{m+k}^{(k)} b_{n,m+k}=\lambda_nb_{n,m},\quad m=0,1,\ldots, n,
\label{7}
\end{equation}
where $b_{n,m+k},\,k=0,\ldots, ,N,$ are the coefficients of the polynomials $\{P_n\}$ as described in \eqref{111}.
\item
Let $M$ be the semi-infinite upper triangular matrix 
$$
M=\left(\begin{array}{ccccccccc}
\delta^{(0)}_{0}&\delta^{(1)}_{1}&\cdots&\cdots&\delta^{(N)}_{N}&0&&&\\
 0&\delta^{(0)}_{1}&\delta^{(1)}_{2}&\cdots&\cdots&\delta^{(N)}_{N+1}&0&&\\
 &0&\delta^{(0)}_{2}&\delta^{(1)}_{3}&\cdots&\cdots&\delta^{(N)}_{N+2}&0&\\
 & &\ddots&\ddots&\ddots&\cdots&&\ddots&\ddots
\end{array}\right)
$$
and let $M_{n+1}$ be the truncation of
$M$
formed by its first 
$n+1$
rows and columns for each fixed $n\in \mathbb{N}$. Assume 
\begin{equation}
\lambda_n\neq 0, \lambda_1,\,\lambda_2,\ldots, \lambda_{n-1},\quad n=1,2,\ldots
\label{autovectores}
\end{equation}
Then $P_n(x),\,n=1,2\ldots,$ is the unique monic polynomial satisfying \eqref{11} whose coefficients $b_{n,0},\cdots, b_{n,n-1}$ in \eqref{111} determine an eigenvector of $M_{n+1}$ corresponding to the eigenvalue $\lambda_n$. That is, 
\begin{equation}
\label{otro11}
(M_{n+1}-\lambda_nI_{n+1})b_n=0,
\end{equation}
where $b_n=(b_{n,0},\cdots, b_{n,n-1},1)^T$. 
\end{enumerate}
\end{teo}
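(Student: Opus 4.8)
The plan is to prove part (1) first by a direct coefficient comparison, and then derive part (2) as a structural reinterpretation of the equations in part (1).

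The plan is to prove part (1) by a direct coefficient comparison and then obtain part (2) as a reformulation of the resulting linear system combined with a triangularity argument.

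For part (1), I would insert the expansions \eqref{1111} and \eqref{111} into the left-hand side of \eqref{11}. Differentiating $P_n$ term by term gives $\partial_x^i P_n=\sum_{j\ge i}b_{n,j}\,\tfrac{j!}{(j-i)!}\,x^{j-i}$, so $a_i(x)\partial_x^iP_n(x)$ is a sum of monomials $x^{\ell+j-i}$ with coefficient $a_{i,\ell}\,b_{n,j}\,\tfrac{j!}{(j-i)!}$. Collecting the coefficient of $x^m$ on both sides of \eqref{11}, for $m=0,\ldots,n$, and setting $k:=i-\ell$ (so that the index of $b_n$ becomes $m+k$) turns the double sum over $(i,\ell)$ into a double sum over $(i,k)$. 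The key algebraic identity is $\binom{m+k}{i}\,i!=\tfrac{(m+k)!}{(m+k-i)!}$, which, after interchanging the order of summation, shows that the inner sum over $i$ runs from $i=k$ (forced by $\ell=i-k\ge0$) to $i=m+k$ (forced by $\ell\le m$, i.e.\ by the requirement that $\partial_x^i$ not annihilate the relevant monomial). This is exactly the range in the definition \eqref{deltas_aes} of $\delta_{m+k}^{(k)}$. The only remaining points needing care are that the term $i=0$ is harmless because $a_0\equiv0$ and that terms with $i>N$ vanish by \eqref{aaa}; once these are accounted for, the inner sum equals $\delta_{m+k}^{(k)}$ and \eqref{11} is seen to be equivalent to \eqref{7}.

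For part (2), I would first read off from the description of $M$ that its entry in position $(m,m+k)$ is $\delta_{m+k}^{(k)}$ for $0\le k\le N$ and $0$ otherwise, so that $M$ is upper triangular. Evaluating \eqref{7} at $m=n$ and using $b_{n,n}=1$ and $b_{n,n+k}=0$ for $k\ge1$ gives $\delta_n^{(0)}=\lambda_n$; hence the diagonal entries of $M$ are precisely the eigenvalues $\lambda_0,\lambda_1,\ldots$. Because $b_{n,j}=0$ for $j>n$, the equations \eqref{7} for $m=0,\ldots,n$ involve only the components of $b_n=(b_{n,0},\ldots,b_{n,n-1},1)^T$, and they coincide row by row with the truncated system $(M_{n+1}-\lambda_nI_{n+1})b_n=0$; this establishes \eqref{otro11}. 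For uniqueness, since $M_{n+1}$ is upper triangular with diagonal $\lambda_0,\ldots,\lambda_n$, the hypothesis \eqref{autovectores} (together with $\lambda_0=0$) guarantees that $\lambda_n$ occurs exactly once on the diagonal, so it is a simple eigenvalue with one-dimensional eigenspace. Concretely, solving \eqref{otro11} from the bottom row upward, the $m=n$ equation is automatically satisfied and leaves $b_{n,n}$ free, while for each $m<n$ the nonzero factor $\lambda_m-\lambda_n$ determines $b_{n,m}$ uniquely in terms of $b_{n,m+1},\ldots,b_{n,n}$. Thus the eigenvector is fixed up to scaling, and the monic normalization $b_{n,n}=1$ selects it uniquely; any monic polynomial of degree $n$ satisfying \eqref{11} has, by part (1), a coefficient vector solving \eqref{otro11}, so by this uniqueness it must coincide with $P_n$.

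I expect the main obstacle to be the bookkeeping in part (1): correctly reindexing the double sum via $k=i-\ell$ and, above all, matching the summation limits $k\le i\le m+k$ together with the boundary conventions $a_0\equiv0$ and \eqref{aaa}, so that the inner sum lands exactly on the definition \eqref{deltas_aes} of $\delta_{m+k}^{(k)}$. The arguments in part (2) are structural and follow routinely once the upper triangular form of $M$ and the identity $\delta_n^{(0)}=\lambda_n$ are in place.
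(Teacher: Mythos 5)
Your proposal is correct and follows essentially the same route as the paper: part (1) by direct coefficient comparison after expanding $a_i(x)\partial_x^iP_n(x)$ (your reindexing $k=i-\ell$ matches the paper's identification of the inner sum with $\delta_{m+k}^{(k)}$), and part (2) by reading \eqref{7} as the upper triangular system \eqref{otro11} and using the distinctness hypothesis \eqref{autovectores} on the diagonal entries $\lambda_0,\ldots,\lambda_n$ to get uniqueness. Your back-substitution argument is just a more explicit version of the paper's observation that $\det(M_n-\lambda_n I_n)\neq 0$, so no substantive difference.
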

(We understand, here and in the rest of the paper, that
$
b_{n,s}=0
$
when 
$s>n$.)

As a consequence of \eqref{7}, the eigenvalues of $L$ are determined as 
\begin{equation}
\lambda_n=\delta_n^{(0)},\quad n\in \mathbb{N}.
\label{****}
\end{equation}
From \eqref{deltas_aes} and \eqref{****}, if \eqref{11} holds then the eigenvalues can be obtained as
\begin{equation}
\lambda_n=\sum_{i=1}^{\min\{n,N\}} 
{n \choose i} i!a_{i,i},\quad n=1,2,\ldots
\label{2}
\end{equation}
(it is also a well-known fact in the literature, see \cite{Everitt}, \cite{Krall}). This expression allows us to remark that, given a differential operator $L$, the sequence of eigenvalues $\{\lambda_n\}$ is unique. 

In the proof of Theorem \ref{lema1} we will see that condition \eqref{autovectores} is not neccesary for the existence of eigenpolynomials. However,  under such condition, the relation \eqref{otro11} provides an easy method to obtain the coefficients of these polynomials as coordinates of eigenvectors for a sequence of finite triangular matrices. Moreover, \eqref{autovectores} is also a relevant condition in other places of this work. In particular, we highlight its importance in the expression of the coefficients of the eigenpolynomials in \eqref{coeficientes}. Henceforth, in the sequel we assume that condition \eqref{autovectores} is satisfied. That is, we assume that the eigenvalues 
$\lambda_n,\,n=0,1,\ldots,  $ are all different from zero and also different from each other, which only depends on the leading coefficients
$
a_{n,n}
$
in \eqref{1111}. 

In our next result, we provide the explicit expression of the coefficients of each eigenpolynomial in terms of the elements of the sequence
$
\{\delta_n^{(k)}\}
$. Due to \eqref{deltas_aes}, this result implies the uniqueness of the sequence 
$
\{P_n\}
$
of eigenpolynomials, which is completely determined by the coefficients of polynomials
$
a_i(x)
$
that define the differential operator $L$.

\begin{teo}
\label{teorema_coeficientes}
Under the above conditions and using the notation from \eqref{111}, for each $n\in\mathbb{N}$ we have 
\begin{equation}
\label{coeficientes}
b_{n,i}=\mathlarger\sum_{E_n^{(i)}}\left(\prod_{s=1}^k\frac{\delta^{(i_s)}_{i+i_1+\cdots +i_s}}{\lambda_n-\lambda_{i+i_1+\cdots +i_{s-1}}}\right),\quad i=0,1,\ldots , n-1,
\end{equation}
where the sum is extended to the set
$
E_n^{(i)}=\{(i_1,\ldots,i_k)\in\mathbb{N}^k\,:\, k\in \mathbb N, \,i_1+\cdots +i_k=n-i\}
$
and we understand 
$
i+i_1+\cdots +i_{s-1}=i
$
when 
$s=1 $.
\end{teo}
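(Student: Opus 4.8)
The plan is to prove the explicit formula \eqref{coeficientes} for the eigenpolynomial coefficients by exploiting the recursive structure already encoded in part (1) of Theorem \ref{lema1}. Let me think about how the recurrence in \eqref{7} drives everything.

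We have $\sum_{k=0}^N \delta_{m+k}^{(k)} b_{n,m+k} = \lambda_n b_{n,m}$ for each $m$. Separating out the $k=0$ term, which is $\delta_m^{(0)} b_{n,m} = \lambda_m b_{n,m}$ (using $\delta_m^{(0)}=\lambda_m$ from \eqref{****}), I can isolate $b_{n,m}$ and solve backward. So I should try a downward recursion on the index $m$, starting from $b_{n,n}=1$.

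\textbf{Setting up the recursion.} First I would rewrite \eqref{7} by pulling out the $k=0$ contribution. Since $\delta_m^{(0)}=\lambda_m$, the relation becomes
\begin{equation}
\lambda_m b_{n,m} + \sum_{k=1}^N \delta_{m+k}^{(k)} b_{n,m+k} = \lambda_n b_{n,m},
\label{planrec}
\end{equation}
so that, whenever $\lambda_n\neq\lambda_m$ (guaranteed by \eqref{autovectores} for $m<n$),
\begin{equation}
b_{n,m} = \frac{1}{\lambda_n-\lambda_m}\sum_{k=1}^N \delta_{m+k}^{(k)} b_{n,m+k}.
\label{planrec2}
\end{equation}
This expresses each $b_{n,m}$ in terms of the higher-index coefficients $b_{n,m+1},\dots,b_{n,m+N}$, with the top coefficient $b_{n,n}=1$ as the base case. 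The plan is then to unfold \eqref{planrec2} repeatedly down from index $n$ to index $i$ and to identify the resulting nested sum with the combinatorial sum over $E_n^{(i)}$.

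\textbf{Matching the combinatorial structure.} The key observation is that each application of \eqref{planrec2} introduces one factor of the form $\delta^{(k)}_{m+k}/(\lambda_n-\lambda_m)$ and advances the index by $k$ (with $1\le k\le N$). A full unfolding from $i$ up to $n$ therefore corresponds to choosing a composition $i_1+\cdots+i_k=n-i$ with each part $i_s\ge 1$ — precisely the index set $E_n^{(i)}$. Reading the chain of substitutions in order, the denominator produced at the $s$-th step is $\lambda_n-\lambda_{i+i_1+\cdots+i_{s-1}}$ (the partial sum $i+i_1+\cdots+i_{s-1}$ being the current index before the $s$-th jump, which equals $i$ when $s=1$), and the numerator is $\delta^{(i_s)}_{i+i_1+\cdots+i_s}$. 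This matches \eqref{coeficientes} term by term. I would make this rigorous by induction on $n-i$: the base case $i=n$ gives the empty product equal to $b_{n,n}=1$, and the inductive step applies \eqref{planrec2} once, pulls the factor $\delta^{(i_1)}_{i+i_1}/(\lambda_n-\lambda_i)$ outside, and invokes the inductive hypothesis on each $b_{n,i+i_1}$ (noting that the sum defining $E_n^{(i)}$ decomposes according to the first part $i_1$, and that the shifted index set $E_n^{(i+i_1)}$ produces exactly the remaining factors).

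\textbf{Main obstacle.} The genuinely delicate point is the bookkeeping of the denominators across the recursion. When I peel off the first jump $i\to i+i_1$ and apply the inductive hypothesis to $b_{n,i+i_1}$, the hypothesis supplies denominators of the form $\lambda_n-\lambda_{(i+i_1)+i_2+\cdots+i_{s-1}}$; I must verify these align exactly with the denominators $\lambda_n-\lambda_{i+i_1+\cdots+i_{s-1}}$ claimed in \eqref{coeficientes} after re-indexing, and in particular that the outer factor contributes precisely the missing $s=1$ denominator $\lambda_n-\lambda_i$. I also need the truncation to be respected — the sum over $k$ in \eqref{planrec2} effectively runs only up to $\min\{N,n-m\}$ since $b_{n,s}=0$ for $s>n$, and the convention $\delta^{(k)}_{\cdot}$ vanishes compatibly once $a_i\equiv 0$ for $i>N$ — so that no spurious terms with a part $i_s>N$ appear. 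Confirming that $E_n^{(i)}$ is exactly the correct index set (no over- or under-counting of compositions) is where the argument must be checked most carefully.
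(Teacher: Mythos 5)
Your proposal is correct and follows essentially the same route as the paper: the paper likewise isolates the $k=0$ term of \eqref{7} (in the form \eqref{matriz}, using $\delta^{(0)}_m=\lambda_m$) to obtain the downward recursion $(\lambda_n-\lambda_m)b_{n,m}=\sum_{k\geq 1}\delta^{(k)}_{m+k}b_{n,m+k}$, inducts on the gap $n-i$, and recombines by decomposing the composition set as $E_{n}^{(i)}=\bigcup_{j} E_{n}^{(i+j)}$ according to the first part, exactly as you describe. The only cosmetic differences are your choice of base case (gap $0$ with the empty product, versus the paper's gap $1$ giving $b_{m+1,m}=\delta^{(1)}_{m+1}/(\lambda_{m+1}-\lambda_m)$) and your explicit remark that compositions containing a part $i_s>N$ contribute nothing because $\delta^{(k)}_{\cdot}=0$ for $k>N$, a point the paper leaves implicit.
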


The above concepts and results allow us to study the linear transformations 
\begin{equation}
\label{999}
P_n^{(1)}=P_n+\gamma_nP_{n-1}
\end{equation}
of the eigenpolynomials 
$
\{P_n\},
$
where the sequence   
$
\{P_n^{(1)}
\}
$
is obtained from a given sequence
$
\{\gamma_n\}\subset \mathbb{C}.
$
We focus our interest in the case of new families
$
\{P_n^{(1)}
\}
$
which are eigenpolynomials of some finite order differential operator and also satisfy a $(p+2)$-term recurrence relation. In the following theorem, a necessary condition for \eqref{999} to provide a new family of eigenpolynomials for some finite order operator is given. 

\begin{teo}
\label{teorema1}
Assume 
$
\{P_n^{(1)}
\}
$
and
$
\{\lambda_n\}
$
are eigenpolynomials and eigenvalues, respectively, of some finite order differential operator 
\begin{equation}
L^{(1)}=\sum_{i=0}^{\widetilde N}a^{(1)}_i(x)\partial_x^i
\label{operador_nuevo}
\end{equation}
of order 
$\widetilde N\geq N$.
Then 
\begin{equation}
\label{la_del_teorema1}
\sum_{j=0}^{k-1}(-1)^j\left[\sum_{s=1}^N {{n-k+j} \choose {s-1}} s!a_{s,s}\right]b_{n-k+j,n-k}
\sum_{r=1}^{k-j}\gamma_{n-k+j+1}\ldots \gamma_{n-k+j+r} E_{k,n,j+r-1}
=0
\end{equation}
for any $n,\,k\in \mathbb{N}$ such that 
$n\geq k> \widetilde N,$
where
\begin{equation}
\label{otro17}
 E_{k,n,s}=
\left|
\begin{array}{ccccc}
b_{n-k+s+2,n-k+s+1}& b_{n-k+s+3,n-k+s+1}&\cdots&\cdots  & b_{n,n-k+s+1}\\ 
1& b_{n-k+s+3,n-k+s+2}&\cdots &\cdots& b_{n,n-k+s+2}\\ 
0 & 1&   \cdots& \cdots &b_{n,n-k+s+3} \\
\vdots  & 0  & \ddots &&\vdots \\
\vdots  &\vdots & \ddots &\ddots &\vdots \\
0 & 0 &\cdots & 1& b_{n,n-1}
 \end{array}
\right|.
\end{equation}
\end{teo}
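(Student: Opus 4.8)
The plan is to apply Theorem \ref{lema1} to the new operator $L^{(1)}$, to pin down its leading coefficients from the fact that it shares the eigenvalues $\{\lambda_n\}$ with $L$, and then to use the order gap $k>\widetilde N$ to eliminate the unknown coefficients of $L^{(1)}$, leaving an identity in the data of $L$ and of the sequence $\{\gamma_n\}$ only.

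First I would record two preliminary facts. Writing $\delta_n^{(1),(t)}$ for the sequence \eqref{deltas_aes} built from the coefficients of $L^{(1)}$, and $b_{n,m}^{(1)}$ for the coefficients of $P_n^{(1)}$, Theorem \ref{lema1} applied to $L^{(1)}$ says that $L^{(1)}P_n^{(1)}=\lambda_nP_n^{(1)}$ is equivalent to $\sum_{t=0}^{\widetilde N}\delta_{m+t}^{(1),(t)}b_{n,m+t}^{(1)}=\lambda_n b_{n,m}^{(1)}$ for every $m$. Since $\{P_n^{(1)}\}$ and $\{P_n\}$ carry the same eigenvalues, the eigenvalue formula \eqref{2} forces the two operators to have the same leading coefficients, $a_{s,s}^{(1)}=a_{s,s}$ for $s\le N$ and $a_{s,s}^{(1)}=0$ for $N<s\le\widetilde N$; in particular $\delta_m^{(1),(0)}=\lambda_m$, and by \eqref{2} together with Pascal's rule the bracket in \eqref{la_del_teorema1} is exactly the consecutive difference $\sum_{s=1}^{N}\binom{n-k+j}{s-1}s!\,a_{s,s}=\lambda_{n-k+j+1}-\lambda_{n-k+j}$. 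Second, from \eqref{999} the coefficients transform as $b_{n,m}^{(1)}=b_{n,m}+\gamma_n b_{n-1,m}$; equivalently the coefficient matrices factor as $B^{(1)}=GB$, where $B=(b_{n,m})$ is lower unitriangular and $G$ is the lower bidiagonal matrix with $1$ on the diagonal and $\gamma_n$ on the subdiagonal.

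The heart of the argument is an elimination step. Fix $n\ge k>\widetilde N$ and put $m_0=n-k$. Moving the $t=0$ term across, the level-$m_0$ relation reads $(\lambda_n-\lambda_{m_0})b_{n,m_0}^{(1)}=\sum_{t=1}^{\widetilde N}\delta_{m_0+t}^{(1),(t)}b_{n,m_0+t}^{(1)}$, and the numbers $\delta_{m_0+t}^{(1),(t)}$, $t=1,\dots,\widetilde N$, are themselves determined triangularly through the top relations (levels $m=n'-1,\dots,n'-\widetilde N$ of the degrees $n'\le n$) in terms of the coefficients $b_{n',m'}^{(1)}$. Because $k>\widetilde N$, the level-$m_0$ relation is not consumed in defining any of the $\delta^{(1)}$; it is a surplus identity, and substituting the triangular expressions for the $\delta_{m_0+t}^{(1),(t)}$ turns it into a linear relation among the $b^{(1)}$ alone. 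Packaging this elimination as a single vanishing determinant of the $b^{(1)}$'s — with the weight column carrying the factors $(\lambda_{n'}-\lambda_{m_0})b_{n',m_0}^{(1)}$ — gives the identity I would then rewrite.

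Finally I would convert this determinant into \eqref{la_del_teorema1} using the factorization $B^{(1)}=GB$: substituting $b_{n',m}^{(1)}=b_{n',m}+\gamma_{n'}b_{n'-1,m}$ and expanding by multilinearity (Cauchy--Binet), the minors of the bidiagonal factor $G$ are precisely the products of consecutive parameters $\gamma_{n-k+j+1}\cdots\gamma_{n-k+j+r}$, while the complementary minors of $B$ are exactly the lower-Hessenberg determinants $E_{k,n,s}$ of \eqref{otro17}; the weight column, after telescoping $\lambda_{n'}-\lambda_{m_0}=\sum(\lambda_{i+1}-\lambda_i)$, contributes the factor $(\lambda_{n-k+j+1}-\lambda_{n-k+j})\,b_{n-k+j,n-k}$ attached to the index $j$, and the cofactor signs supply $(-1)^j$. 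I expect the main obstacle to lie precisely here: setting up the single vanishing determinant so that its expansion reproduces exactly the ranges $0\le j\le k-1$, $1\le r\le k-j$, the correct signs, the $\gamma$-products, and in particular showing that the complementary minors of $B$ collapse to the Hessenberg form \eqref{otro17} rather than to general minors, and that the weight column telescopes into consecutive eigenvalue differences rather than the differences $\lambda_{n'}-\lambda_{m_0}$ present at the outset. I would carry out this bookkeeping by induction on the determinant size $k-s-1$, peeling off one column at a time and using that $B$ is unitriangular so that the subdiagonal entries specialize to $1$.
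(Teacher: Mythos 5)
Your route is, in outline, the paper's own: the ``surplus relation packaged as a single vanishing determinant'' is precisely the paper's Lemma \ref{lema3} applied to $L^{(1)}$ (that determinant equals $(-1)^k\widetilde\delta_n^{(k)}$, where $\widetilde\delta_n^{(k)}$ is built from the coefficients of $L^{(1)}$ as in \eqref{deltas_aes}, and it vanishes when $k>\widetilde N$ because $a_i^{(1)}\equiv 0$ for $i\geq k$); your expansion through $B^{(1)}=GB$ is the content of the paper's Lemma \ref{lema5}; your Pascal-rule identification of the bracket with the consecutive difference $\lambda_{n-k+j+1}-\lambda_{n-k+j}$ is the paper's Lemma \ref{lema2bis}; and the deduction $a_{s,s}^{(1)}=a_{s,s}$ from \eqref{2} opens the paper's proof exactly as it opens yours. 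So the skeleton is the right one.

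There are, however, two gaps. The concrete one: the multilinear expansion of the vanishing determinant does \emph{not} consist only of terms carrying at least one factor $\gamma$. The identity minor of the bidiagonal factor $G$ contributes the corresponding pure-$b$ term --- the same weighted Hessenberg determinant with the untransformed coefficients $b_{n',m}$ throughout --- and this term has no counterpart in \eqref{la_del_teorema1}, every summand of which contains a product $\gamma_{n-k+j+1}\cdots\gamma_{n-k+j+r}$. Hence the conversion you describe cannot be ``exact''; what the expansion actually yields is (left-hand side of \eqref{la_del_teorema1}) plus (pure-$b$ term) $=0$. To finish, you must kill the pure-$b$ term separately, by running the same elimination for $L$ itself: that determinant equals $(-1)^k\delta_n^{(k)}$ (Lemma \ref{lema3} again), which vanishes because $k>\widetilde N\geq N$ and $a_i\equiv 0$ for $i>N$ in \eqref{deltas_aes}. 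The paper invokes exactly this fact ($\delta_n^{(k)}=0$ for $n\geq k>N$) in the last lines of its proof, and without it your argument proves a different identity. The second gap is the one you flag yourself: the entire combinatorial content --- that the complementary minors of $B$ collapse to the Hessenberg determinants \eqref{otro17} rather than to general minors, that the cofactor signs come out as $(-1)^j$, and that the weights reorganize so that the single difference $\lambda_{n-k+j+1}-\lambda_{n-k+j}$ (not $\lambda_{n'}-\lambda_{n-k}$) attaches to $b_{n-k+j,n-k}$ --- is deferred rather than carried out. That is not a routine verification: it is the paper's Lemma \ref{lema5} combined with Lemma \ref{lema4}, proved there by exactly the column-by-column splitting you propose (writing $b_{n,i}^{(1)}=b_{n,i}+\gamma_n b_{n-1,i}$ in one column at a time, splitting the determinant, and using unitriangularity to collapse), so your plan for filling it is sound, but as it stands the proposal is a plan rather than a proof.
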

We point out the relevance of Theorem \ref{teorema1}, since it makes unnecessary to study coefficients 
$
a^{(1)}_i(x)
$
and values of 
$\widetilde N$,
reducing the problem of non existence to the verification of a condition that depends only on the sequence $\{\gamma_n\}$, as well as the initial operator \eqref{1} and its eigenpolynomials $\{P_n\}$.

In the last part of this work, we will study a particular case of the linear transformation \eqref{999}. More precisely, we will focus on the Geronimus transformations, a particular case of Darboux transformations (see \cite{Geronimus}). Depending on the field of application, different versions of this kind of transformations have been used in the literature  (see the introduction in \cite{MaribelPaco} for more details). We will use the definitions introduced in \cite{IntSys2}, which have also been used in \cite{IntSys1, IntSys3, Pade, Dani}. For the convenience of the reader, Section \ref{seccion3} will include a brief summary of these concepts. In \cite{Grunbaum3}, \cite{Grunbaum1}, the application of Geronimus transformations, under certain conditions, to some classical polynomials was studied. More precisely, in \cite{Grunbaum3} it was showed that the Krall polynomials can be obtained from some instances of the Laguerre and the Jacobi polynomials under these transformations. Moreover, in \cite{Grunbaum1}, the Laguerre polynomials were studied, and the transformation of these polynomials in eigenfunctions of a finite order differential operator under several applications of Geronimus transformation was proved. The Hermite polynomials, in combination with the Bessel polynomials, were only partially studied in \cite{Grunbaum4}. In our paper, we complement these works by showing that the Geronimus transformation on the Hermite polynomials does not produce a new family of eigenfunctions for any finite order differential operator. This will be proved using Theorem \ref{teorema1}, that is, we will prove that condition \eqref{la_del_teorema1} is not satisfied.

The rest of the paper is distributed as follows. In Section \ref{seccion2} we analyze the relationships between the sequence 
$
\{\delta^{(k)}_n\}
$
given in \eqref{deltas_aes} and the eigenvalues and eigenpolynomials of $L$.
In order to do that, we will prove Theorems \ref{lema1}-\ref{teorema_coeficientes} and, as an example, we will apply these results to the Hermite polynomials. Section \ref{seccion3} is devoted to the study of the linear transformations \eqref{999} of the sequence of eigenpolynomials. In this section, several auxiliary results and Theorem \ref{teorema1} are proved. Section \ref{seccion3} is applied to the Hermite polynomials in Section \ref{seccion4}, giving in this section some more lemmas and theorems related with condition \eqref{la_del_teorema1}. Finally, in Section \ref{seccion5} we summarize the main conclusions of this work.

\section{Eigenvalues and eigenpolynomials of $L$ }\label{seccion2}
We start this section proving Theorems \ref{lema1}-\ref{teorema_coeficientes}.

\noindent
{\bf Proof of Theorem \ref{lema1}:}

Firstly, we prove the equivalence between \eqref{11} and \eqref{7}. We underline that \eqref{7} is a very important relationship between the sequence 
$
\{\delta_{n}^{(k)}\}
$
and the sequences of eigenvalues and eigenpolynomials satisfying \eqref{11}. We point out that \eqref{7} can be written as
\begin{equation}
\sum_{k=0}^N\left[\sum_{i=k}^{m+k} {m+k \choose i} i!a_{i,i-k}\right]b_{n,m+k}=\lambda_nb_{n,m},\quad n\geq 0\,,\quad m=0,1,\ldots, n\,,
\label{3}
\end{equation}
which in the case $N=2$ is a straightfull interpretation of (11) in \cite{Bochner}. We follow here the lines of this relevant paper of Bochner \cite{Bochner}. For each $n\in \mathbb{N}$, \eqref{111} leads to
\begin{eqnarray*}
\partial_x^{i}P_n(x)& = & \sum_{s=1}^{n-i+1}(i+s-1)(i+s-2)\cdots sb_{n,i+s-1}x^{s-1}\\
& = & \sum_{s=1}^{n-i+1}{i+s-1 \choose i}i!b_{n,i+s-1}x^{s-1},\quad i=1,\ldots, N,
\end{eqnarray*}
where we recall that $b_{n,j}=0$ if $j>n$ and we understand that the sum is equal to 0 if $i>n$. Then, \eqref{11} is equivalent to
\begin{equation}
\sum_{i=1}^N\left(\sum_{k=0}^i a_{i,k}x^k\right) \left(\sum_{s=1}^{n-i+1}{i+s-1 \choose i}i!b_{n,i+s-1}x^{s-1}\right)=\lambda_n\sum_{r=0}^n b_{n,r}x^r.
\label{22}
\end{equation}
Comparing the coefficients of $x^r, r=0,\ldots,$ in both sides of \eqref{22} we arrive to \eqref{3}, as we wanted to prove. 

In the second place, we prove \eqref{otro11}. With this purpose, we consider \eqref{7}, which is, for each fixed $n$,
\begin{equation}
\label{matriz}
(\delta^{(0)}_{m}-\lambda_n)b_{n,m}+\delta^{(1)}_{m+1}b_{n,m+1}+\cdots+ \delta^{(N)}_{m+N}b_{n,m+N}=0\,,\quad m=0,1,\ldots , n.
\end{equation}
The relation \eqref{matriz} can be interpreted as an upper triangular linear system, with matrix of coefficients $M_{n+1}$, whose unknowns are $ b_{n,0},\cdots, b_{n,n-1},b_{n,n}$. In matrix notation this is \eqref{otro11}. Hence the coefficients of $b_n$ define an eigenvector of $M_{n+1}$ corresponding to the eigenvalue $\lambda_n$.

The identity \eqref{****} together with the fact that 
$
\det(M_n-\lambda_n I_n)\neq 0\,$ prove that
the coefficients $ b_{n,0},\cdots, b_{n,n-1}$ are uniquely determined. 
\hfill $\square$

Next, we prove Theorem \ref{teorema_coeficientes}, which shows how the sequence 
$
\{\delta^{(k)}_n\}
$
provides the explicit expression of the coefficients of eigenpolynomials.

\noindent
{\bf Proof of Theorem \ref{teorema_coeficientes}:}

Taking $i=n-m$ in \eqref{matriz} we see 
$$
\delta^{(0)}_{m}b_{m+i,m}+\delta^{(1)}_{m+1}b_{m+i,m+1}+\cdots+ \delta^{(N)}_{m+N}b_{m+i,m+N}=\lambda_{m+i}b_{m+i,m},\quad i=0,1,\ldots,
$$
where, as usual, 
$
b_{m+i,m+j}=0
$
when $j>i$. Then, for $m$ fixed and $i=1,2,\ldots$ we have 
\begin{equation}
\left.
\begin{array}{cccccccccccc}
(\lambda_{m+1}-\lambda_m)b_{m+1,m}& &&&&&& = & \delta_{m+1}^{(1)}\\
(\lambda_{m+2}-\lambda_m)b_{m+2,m}& - &  \delta_{m+1}^{(1)}b_{m+2,m+1}&&&&& = & \delta_{m+2}^{(2)}\\
\vdots & & \vdots &&&&&& \vdots\\
(\lambda_{m+N}-\lambda_m)b_{m+N,m}& - &  \delta_{m+1}^{(1)}b_{m+N,m+1}&-&\cdots & -& \delta_{m+N-1}^{(N-1)}b_{m+N,m+N-1}&= & \delta_{m+N}^{(N)}\\
\vdots & & \vdots &&&&\vdots && \vdots\\
(\lambda_{s}-\lambda_m)b_{s,m}& - &  \delta_{m+1}^{(1)}b_{s,m+1}&-&\cdots & -& \delta_{m+N-1}^{(N-1)}b_{s,m+N-1}&= & \delta_{m+N}^{(N)}b_{s,m+N}\\
\vdots & & \vdots &&&&\vdots && \vdots\\
\end{array}
\right\}
\label{nuevo2}
\end{equation}
Hence, the first relation in \eqref{nuevo2} leads to
$$
b_{m+1,m}=\frac{ \delta_{m+1}^{(1)}}{\lambda_{m+1}-\lambda_m},
$$
which is \eqref{coeficientes} when $n=m+1$ and $i=m$.\\
From the second and consecutive relations of \eqref{nuevo2} we are going to obtain \eqref{coeficientes} for $n=m+k,\,i=m,$ and $k=2,3,\ldots$ In fact, assume that
$b_{m+k,m}$ satisfies \eqref{coeficientes} for $k=2,3,\ldots, q$. Then, the $(q+1)$-th relation of \eqref{nuevo2} implies
\begin{equation}
\label{nuevo3}
(\lambda_{m+q+1}-\lambda_m)b_{m+q+1,m}=\delta_{m+1}^{(1)}b_{m+q+1,m+1}+\delta_{m+2}^{(2)}b_{m+q+1,m+2}+\cdots+\delta_{m+N}^{(N)}b_{m+q+1,m+N}.
\end{equation}
 
On the other hand, since $m$ can be substituted in \eqref{nuevo2} by any $m+p,\,p\in \mathbb N$, we know that
$$
b_{m+q+1,m+j}=\mathlarger\sum_{E_{m+q+1}^{(m+j)}}\left(\prod_{s=1}^k\frac{\delta^{(i_s)}_{m+j+i_1+\cdots +i_s}}{\lambda_{m+q+1}-\lambda_{m+j+i_1+\cdots +i_{s-1}}}\right),\quad j=1,\ldots , N.
$$
From this and \eqref{nuevo3},
\begin{equation}
b_{m+q+1,m}=\mathlarger\sum_{j=1}^{q+1}\frac{\delta^{(j)}_{m+j}}{\lambda_{m+q+1}-\lambda_{m}}\mathlarger\sum_{E_{m+q+1}^{(m+j)}}\left(\prod_{s=1}^k\frac{\delta^{(i_s)}_{m+j+i_1+\cdots +i_s}}{\lambda_{m+q+1}-\lambda_{m+j+i_1+\cdots +i_{s-1}}}\right),\quad j=1,\ldots , N.
\label{nuevo5}
\end{equation}
Furthermore it is obvious that, if  
$
\sum_{s=1}^ki_s=q+1,
$
then 
$
\sum_{i_s\neq j}i_s=q-j+1
$
for each 
$
j\in \{i_1,\ldots, i_k\}.
$
Thus, it is easy to see
$$
E_{m+q+1}^{(m)}=\bigcup_{j=1}^{q+1} E_{m+q+1}^{(m+j)}.
$$
Consequently, from \eqref{nuevo5} we arrive to
\begin{eqnarray*}
b_{m+q+1,m} &= &\mathlarger\sum_{j=1}^{q+1}\mathlarger\sum_{E_{m+q+1}^{(m+j)}}\frac{\delta^{(j)}_{m+j}}{\lambda_{m+q+1}-\lambda_{m}}\left(\prod_{s=1}^k\frac{\delta^{(i_s)}_{m+j+i_1+\cdots +i_s}}{\lambda_{m+q+1}-\lambda_{m+j+i_1+\cdots +i_{s-1}}}\right)\\
& = & \mathlarger\sum_{E_{m+q+1}^{(m)}}
\left(\prod_{s=1}^k\frac{\delta^{(i_s)}_{m+i_1+\cdots +i_s}}{\lambda_{m+q+1}-\lambda_{m+i_1+\cdots +i_{s-1}}}\right),
\end{eqnarray*}
which is \eqref{coeficientes} for $n=m+q+1$ and $i=m$. \hfill $\square$

\begin{rem}
\label{remark 1}
Because
$
\delta_n^{(k)},\,k=0,1,\ldots ,n,
$
are polynomials in $n$, from \eqref{coeficientes} we recover the well-known fact that the coefficients 
$
b_{n,j}
$
are rational functions of the variable $n$.
\end{rem}

As an example, we consider the case of Hermite polynomials, which we denote as
$
\{H_n\}.
$ 
The three-term recurrence relation satisfied by these polynomials is 
$$
	\begin{cases}
		H_n=xH_{n-1}-\frac{1}{2}(n-1)H_{n-2},\ n\in\mathbb{N}\\
		H_0=1,\quad H_{-1}=0,
	\end{cases}	
$$
and the banded matrix associated to this recurrence relation is the tridiagonal matrix
\begin{equation}
\label{matrizHermite}
J=
\left(
\begin{array}{lllccccccc}
0&1&0&\cdots  \\ 
 \frac{1}{2}& 0&1&\ddots \\ 
0&1&0&1&\ddots&\\ 
\vdots  &0&\frac{3}{2}&  \ddots&  \ddots&\\
 &&&\ddots &\ddots& 
 \end{array}
\right).
\end{equation}
It is well-known \cite{Bochner} that the Hermite polynomials are the classical eigenpolynomials of  the differential operator 
$$
L\equiv a_1(x)\partial_x +a_2(x)\partial_x^2
$$
where
$a_1(x)=-2x$ and $a_2(x)=1$. With some straight computations,  we obtain
$$
\delta_r^{(k)}=\left\{
\begin{array}{cccc}
-2r & \text{if} &k=0\\
0 & \text{if} &k=1\\
r(r-1) & \text{if} &k=2\\
0 & \text{if} &k\geq 3
\end{array}
\right. ,\qquad r=k,k+1,\ldots 
$$
The eigenvalues are 
$
\delta_n^{(0)}=\lambda_n=-2n,\,n=1,2,\ldots,
$
and Theorem \ref{teorema_coeficientes} is applicable because the restriction \eqref{autovectores} is fulfilled.

Due to $\delta_r^{(1)}=0$ and $E_n^{(n-1)}=\{1\}$, in  \eqref{coeficientes} we see $b_{n,n-1}=0 $. Also, when $n-i$ is odd, there exists an index
$i_s$
odd in each 
$
(i_1,\ldots ,i_k)\in E_n^{(i)}
$
and $b_{n,i}=0 $. Regarding the even coefficients 
$b_{m+2s,m}, m,s,\in \mathbb N$, since 
$E_{m+2s}^{(m)}$
can be substituted by 
$\{\stackrel{(s)}{(2,\ldots,2)}\},$
we have

$$
b_{m+2s,m}=\prod_{j=1}^s\frac{\delta_{m+2j}^{(2)}}{\lambda_{m+2s}-\lambda_{m+2j-2}}=\frac{(-1)^s}{2^{2s}}\frac{(m+2s)!}{m!s!}
$$
or, what is the same,

\begin{equation}
b_{n,n-2s}=\frac{(-1)^s}{2^{2s}}\frac{n!}{(n-2s)!s!}.
\label{31tilde}
\end{equation}
Hence, we arrive to the known expression for the monic Hermite polynomials

$$
H_n(x)=x^n+\sum_{s=1}^{[\frac{n}{2}]}b_{n,n-2s}x^{n-2s}=\frac{n!}{2^n}\sum_{s=0}^{[\frac{n}{2}]}\frac{(-1)^s}{(n-2s)!s!}(2x)^{n-2s}.
$$

\section{Linear transformations on eigenpolynomials}\label{seccion3}

Given a banded matrix $J$ as in \eqref{otro13}, let $C\in\mathbb{C}$ be such that the determinants of truncation of the infinite matrix $CI-J$ formed by its first $n$ rows and columns satisfy
${\rm det} (CI_n-B_n)\neq 0\ \text{for each}\ n\in\mathbb{N}.$ 
In these conditions, it is well known \cite{MaribelPaco} the existence of a lower triangular matrix $L$ and an upper triangular matrix $U$ such that 
$$
J-CI=UL.
$$
In the case that concerns us, $L$ is a $(p+1)$-banded matrix whose entries in the diagonal we assume equal to 1.
We know that there exist $p$ bidiagonal matrices $L^{(i)},\,i=1,2,\ldots, p,$ such that $L=L^{(1)}L^{(2)}\cdots L^{(p)}$ (see \cite{Dani}), where
$$U=\left(\begin{array}{ccccc}
		\gamma_1 & 1 & & &\\
		& \gamma_{p+1} & 1 & &\\
		& & \gamma_{2p+1} & 1 &\\
		& & & \ddots & \ddots
	\end{array}\right),\, 
	L^{(i)}=\left(\begin{array}{ccccc}
		1 & & & &\\
		\gamma_{i+1} & 1 & & &\\
		& \gamma_{p+i+1} & 1 & &\\
		& & \gamma_{2p+i+1} & 1 &\\
		& & & \ddots & \ddots
	\end{array}\right),\, i=1,\ldots, p.$$
The product 
$
UL^{(1)}L^{(2)}\cdots L^{(p)}
$  
is called {\em bidiagonal Geronimus factorization} of $J$ and was introduced in \cite{Geronimus}. These factorizations together with the ones introduced in \cite{Dani} constitute the so-called {\em bidiagonal Darboux factorization}. We call {\em Geronimus transformation} of $J$ to each $(p+2)$-banded matrix
\begin{equation}
J^{(s)}:=CI+L^{(p-s+1)}L^{(p-s+2)}\cdots L^{(p)}UL^{(1)}\cdots L^{(p-s)},\quad s=1,2,\ldots, p.
\label{3636}
\end{equation}
For each of these matrices
$
J^{(s)}
$ 
there exists an associated sequence 
$\{P^{(s)}_n\}$
of polynomials verifying a $(p+2)$-term recurrence relation. These polynomials are called also {\em Geronimus transformed} of
$\{P_n\}$ (see \cite{Geronimus}).

In the following we analyze the relation between the Geronimus transformation and the linear transformation \eqref{999}, where we assume
$
\gamma_n\neq 0
$
for each
$n\in \mathbb{N}.$
We write \eqref{999} as
$
v^{(1)}(x)=Tv(x),
$
where 
$
v^{(1)}(x)=\left(P^{(1)}_0(x),P_1^{(1)}(x),\ldots \right)^T,
$
$
v(x)=\left(P_0(x),P_1(x),\ldots \right)^T
$
and
\begin{equation}
\label{defT}
T=
\left(
\begin{array}{rcccccccccc}
	1\\
	\gamma_1&1&  \\ 
	0 & \gamma_2 & \ddots \\
	&\ddots&\ddots& \\
\end{array}
\right).	
\end{equation}
Then 
$T^{-1}$
is an infinite lower triangular matrix and
$
v(x)=T^{-1}v^{(1)}(x).
$
We underline in this point the formal sense of 
$T^{-1}$.
This matrix does not necessarily represent an operator, we just understand 
$T^{-1}$
as a table of values satisfying that the formal product of $T$ times $T^{-1}$ is 
$TT^{-1}=T^{-1}T=I$. 
Since $T$ is a bidiagonal matrix, each entry of 
$TT^{-1}$
and 
$T^{-1}T$
is obtained from a sum with a finite number of terms. 

Using \eqref{otro13} and \eqref{999},
$$
xv^{(1)}(x)=TJv(x)=TJT^{-1}v^{(1)}(x)
$$
or, equivalently,
\begin{equation}
\left( 
TJT^{-1}-xI
\right)v^{(1)}(x)=0.
\label{otro36}
\end{equation}
In general, it is possible that $TJT^{-1}$ is not a banded Hessenberg matrix and, in that case,
$
\{P_n^{(1)}\}
$
does not satisfy a $(p+2)$-term recurrence relation for any $p\in \mathbb{N}$. However, if
\begin{equation}
J=CI+UL^{(1)}\cdots L^{(p)}
\label{otro36bis}
\end{equation}
is a bidiagonal Darboux factorization of $J$ then
$$
TJT^{-1}=CI+TUL^{(1)}\cdots L^{(p)}T^{-1}.
$$
Since 
$
TUL^{(1)}\cdots L^{(p-1)}
$
is a $(p+2)$-banded Hessenberg matrix, we have that the sequence of monic polynomials
$
\{P_n^{(1)}\}
$
satisfy a $(p+2)$-term recurrence relation if and only if 
$
L^{(p)}T^{-1}
$
is a diagonal matrix, that is, 
$
T=L^{(p)}.
$
Under these conditions,
$
TJT^{-1}=CI+TUL^{(1)}\cdots L^{(p-1)}
$
coincides with \eqref{3636} when $s=1$, that is, 
$TJT^{-1}=J^{(1)}$.
In other words, 
$
\{P_n^{(1)}\}
$
would be the sequence of polynomials corresponding to the first Geronimus transformation of
$
\{P_n\}.
$
This is the case of the Hermite polynomials, as we will show in Theorem \ref{teorema2_2} of Section \ref{seccion4}.

Given the sequence
$
\{P_n
\}
$ 
of eigenpolynomials of the operator in \eqref{1}, we are interested in studying whether the new polynomials 
$
\{P_n^{(1)}
\}
$
defined in \eqref{999} are eigenfunctions for some finite order differential operator like \eqref{1}. We will prove in this section Theorem \ref{teorema1}, for which we need before some auxiliary results. Firstly, the following lemma gives a relation between the eigenvalues 
$\{\lambda_n\}$
of 
$L$.

\begin{lem}
\label{lema2bis}
For 
$i,\,j \in \mathbb{N}$ and $i<j$ we have
\begin{equation}
\label{autos}
\lambda_j-\lambda_i=\sum_{s=1}^j\left[
{i \choose s-1}+{i+1 \choose s-1}+\cdots {j-1 \choose s-1}
\right]s! a_{s,s},
\end{equation}
where we understand 
$
{m\choose s-1}=0
$
for 
$m<s-1$.
\end{lem}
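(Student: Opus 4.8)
The plan is to derive \eqref{autos} directly from the closed form \eqref{2} for the eigenvalues, combined with a telescoping application of Pascal's rule. First I would put \eqref{2} in a uniform shape: using the convention $a_{s,s}=0$ for $s>N$ coming from \eqref{aaa}, together with $\binom{n}{s}=0$ for $s>n$, the upper limit $\min\{n,N\}$ becomes superfluous and one may write, for every $n$,
\[
\lambda_n=\sum_{s=1}^{n}\binom{n}{s}\,s!\,a_{s,s}.
\]
Since $\binom{i}{s}=0$ for $s>i$, the expression for $\lambda_i$ can be extended to run up to $s=j$ without changing its value, so subtracting the two formulas yields
\[
\lambda_j-\lambda_i=\sum_{s=1}^{j}\left[\binom{j}{s}-\binom{i}{s}\right]s!\,a_{s,s}.
\]

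The heart of the argument is then the purely combinatorial identity
\[
\binom{j}{s}-\binom{i}{s}=\sum_{m=i}^{j-1}\binom{m}{s-1},
\]
which I would prove by telescoping. Pascal's rule $\binom{m+1}{s}=\binom{m}{s}+\binom{m}{s-1}$ gives $\binom{m+1}{s}-\binom{m}{s}=\binom{m}{s-1}$; summing this over $m=i,i+1,\ldots,j-1$ collapses the left-hand side to $\binom{j}{s}-\binom{i}{s}$, while the right-hand side becomes precisely $\binom{i}{s-1}+\binom{i+1}{s-1}+\cdots+\binom{j-1}{s-1}$. Substituting this into the difference above and factoring out $s!\,a_{s,s}$ reproduces \eqref{autos} term by term.

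I do not expect any substantial obstacle in this proof; the only genuine care required is the bookkeeping of the summation limits and of the vanishing-binomial conventions. In particular, the convention $\binom{m}{s-1}=0$ for $m<s-1$ stated in the lemma guarantees that the bracketed sums vanish whenever $s$ exceeds $j$, which is consistent with truncating the outer sum at $s=j$, and it also handles the boundary terms with small $m$. As a sanity check I would verify the case $s=1$ explicitly, where $\binom{j}{1}-\binom{i}{1}=j-i$ must equal $\binom{i}{0}+\cdots+\binom{j-1}{0}$, a sum of $j-i$ ones, confirming the identity at the lowest order.
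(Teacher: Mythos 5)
Your proof is correct and is essentially the paper's own argument: both start from the closed form \eqref{2} and combine Pascal's rule with a telescoping sum, the only (immaterial) difference being that the paper telescopes the chain of adjacent differences $\lambda_{m+1}-\lambda_m$ for $m=i,\ldots,j-1$, while you exchange the order of summation and telescope the binomial coefficients $\binom{m+1}{s}-\binom{m}{s}=\binom{m}{s-1}$ inside the sum over $s$. The bookkeeping of vanishing binomials and of the truncation at $\min\{n,N\}$ is handled correctly in your write-up.
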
 

\begin{proof}
Assume 
$i\in \mathbb{N}$
and
$j=i+1$. Then, 
using \eqref{2} and
$
{i \choose s}+ {i\choose s-1}={i+1\choose s}
$,
we have
\begin{eqnarray}
\label{este}
\lambda_{i+1}-\lambda_i&=&\sum_{s=1}^{i+1}
{i+1 \choose s}s! a_{s,s}-\sum_{s=1}^{i}{i \choose s}s! a_{s,s}\nonumber\\
&=&(i+1)!a_{i+1,i+1}+\sum_{s=1}^{i}\left[{i+1 \choose s}- {i\choose s}\right]s! a_{s,s}=\sum_{s=1}^{i+1}
{i \choose s-1}s! a_{s,s}
\end{eqnarray}

Take now $i,j\in \mathbb{N}$ and $j>i+1$. If we write 
$
\lambda_j-\lambda_i=(\lambda_j-\lambda_{j-1})+\cdots +(\lambda_{i+1}-\lambda_i)
$, we can use \eqref{este} in each difference to reach  \eqref{autos}. 
\end{proof}	

Also in the study of the transformed polynomials 
$
\{P_n^{(1)}\},
$ the sequence 
$
\{\delta_{n}^{(k)}\}
$
defined in \eqref{deltas_aes} plays an important role. Firstly, we show that it is possible to express such sequence explicitly in terms of the eigenvalues and the eigenpolynomials of $L$.

\begin{lem}
\label{lema3}
For $n\in \mathbb{N}$ and $k=1,\ldots,n,$ we have
$
(-1)^k\delta_n^{(k)}=$
\begin{equation}
\label{1*}
\left|
\begin{array}{cccccc}
 (\lambda_{n-k}-\lambda_{n-k+1}) b_{n-k+1,n-k}& (\lambda_{n-k}-\lambda_{n-k+2}) b_{n-k+2,n-k}&\cdots&\cdots  &(\lambda_{n-k}-\lambda_{n}) b_{n,n-k}\\ 
1& b_{n-k+2,n-k+1}&\cdots &\cdots& b_{n,n-k+1}\\ 
0 & 1&   \cdots& \cdots &b_{n,n-k+2} &\\
\vdots  & 0  & \ddots &&\vdots \\
\vdots  &\vdots & \ddots &\ddots &\vdots \\
0 & 0 &\cdots & 1& b_{n,n-1}
 \end{array}
\right|.
\end{equation}

\end{lem}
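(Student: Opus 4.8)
The plan is to recognize the determinant in \eqref{1*} as an almost-degenerate one whose first row is a linear combination of the rows below it, the dependence failing by a single uncancelled term that produces $\delta_n^{(k)}$. Writing $m=n-k$ throughout, the starting point is relation \eqref{matriz} (equivalently the general row of \eqref{nuevo2}), which after using $\delta_m^{(0)}=\lambda_m$ from \eqref{****} reads
\[
(\lambda_{m+j}-\lambda_m)\,b_{m+j,m}=\sum_{l=1}^{N}\delta_{m+l}^{(l)}\,b_{m+j,m+l},\qquad j=1,\dots,k.
\]
Here I use the conventions from \eqref{111} that $b_{r,r}=1$ and $b_{r,s}=0$ for $s>r$, so that only the terms with $1\le l\le j$ survive on the right-hand side.

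First I would identify the entries of the $k\times k$ matrix in \eqref{1*}. Labelling its rows and columns $1,\dots,k$, the first row has entries $c_j=(\lambda_m-\lambda_{m+j})\,b_{m+j,m}$, while for $i\ge 2$ the $(i,j)$ entry is $b_{m+j,m+i-1}$ (so the subdiagonal $1$'s are the instances $b_{m+j,m+j}=1$, and the zeros below them are the instances with second index exceeding the first). Comparing with the displayed relation, for $i=l+1$ the $(l+1,j)$ entry equals $b_{m+j,m+l}$; hence, denoting by $A_{i,j}$ the $(i,j)$ entry,
\[
c_j=-\sum_{l=1}^{k-1}\delta_{m+l}^{(l)}\,A_{l+1,j}\;-\;\delta_{m+k}^{(k)}\,b_{m+j,m+k}.
\]
The term $l=k$ cannot be folded into a row of the matrix, because row $k+1$ does not exist; but $b_{m+j,m+k}=0$ for $j<k$ while $b_{m+k,m+k}=1$, so this defect contributes $-\delta_{m+k}^{(k)}=-\delta_n^{(k)}$ in the last column only.

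Next I would apply the determinant-preserving row operation replacing the first row by $\text{Row }1+\sum_{l=1}^{k-1}\delta_{m+l}^{(l)}\,\text{Row }(l+1)$. By the previous identity this collapses the first row to $(0,\dots,0,-\delta_n^{(k)})$. Expanding along this row leaves a single nonzero cofactor, at position $(1,k)$, with sign $(-1)^{1+k}$ and minor obtained by deleting row $1$ and column $k$. That minor is upper triangular with $1$'s on the diagonal---its below-diagonal entries are $b_{m+j,m+i-1}$ with $i-1>j$, hence $0$---so its determinant is $1$. Collecting signs yields $\det=(-1)^{1+k}(-\delta_n^{(k)})=(-1)^{k}\delta_n^{(k)}$, which is \eqref{1*}.

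The main obstacle is the boundary bookkeeping: one must verify precisely that the linear dependence of the first row on the rows below it fails by exactly one term, namely the missing $(k+1)$-th row corresponding to $\delta_{m+k}^{(k)}b_{m+k,m+k}$, and that this single uncancelled term lands in the last column. The remaining content---that the surviving minor is unit upper triangular---is immediate from the conventions on the $b_{r,s}$.
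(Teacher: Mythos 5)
Your proof is correct, but it follows a genuinely different route from the paper's. The paper argues by induction on $n$: after settling $k=1$ directly from \eqref{matriz}, it expands the determinant in \eqref{1*} along its \emph{last} column, recognizes each minor as a lower-order determinant of the same shape, substitutes $(-1)^{k-s}\delta_{n-s}^{(k-s)}$ for those via the inductive hypothesis \eqref{4}, and then collapses the resulting sum by a single application of \eqref{matriz} (for $P_n$ at coefficient index $n-k$). You avoid induction altogether: applying \eqref{matriz} once for each of the polynomials $P_{n-k+1},\dots,P_n$ at the fixed coefficient index $m=n-k$, together with $\delta_m^{(0)}=\lambda_m$ from \eqref{****}, you exhibit the first row as a linear combination of the rows beneath it up to a single defect $-\delta_n^{(k)}$ in entry $(1,k)$, and a determinant-preserving row operation plus expansion along the collapsed first row (against a unit upper-triangular minor) finishes in one step. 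Your version is shorter and makes the mechanism transparent --- the determinant measures exactly the failure of the first row to depend linearly on the others --- while the paper's version records the recursion $G_n^{(k)}=G_{n-1}^{(k-1)}b_{n,n-1}-G_{n-2}^{(k-2)}b_{n,n-2}+\cdots$, a nesting of these Hessenberg-type determinants that is in the same spirit as the determinant manipulations used later in Lemma \ref{lema5}. One bookkeeping point you should make explicit: passing from $\sum_{l=1}^{N}\delta_{m+l}^{(l)}b_{m+j,m+l}$ to $\sum_{l=1}^{k}\delta_{m+l}^{(l)}b_{m+j,m+l}$ (the form your row operation needs) changes the upper limit from $N$ to $k$. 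When $k\le N$, your remark that $b_{m+j,m+l}=0$ for $l>j$ covers it; but when $k>N$ (which the lemma allows, since $k$ runs up to $n$) you must also invoke $\delta_{m+l}^{(l)}=0$ for $l>N$, immediate from \eqref{aaa} and \eqref{deltas_aes}, in which range both sides of \eqref{1*} simply vanish. This is a one-line addition, not a gap.
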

\begin{proof}
	The determinant on the right side of \eqref{1*} is of order $k$. We proceed by induction on $n$.
	If $k=1$, applying \eqref{matriz} (with $m=n-1$), we obtain
	$$
	 (\delta_{n-1}^{(0)}-\lambda_{n}) b_{n,n-1}+\delta_{n}^{(1)}=0,
	$$
	which is equivalent to
	$$
	(\lambda_{n-1}-\lambda_{n})b_{n,n-1}=-\delta_{n}^{(1)},
	$$
	that is exactly \eqref{1*}. We have just proved that \eqref{1*} holds for all $n\in \mathbb{N}$ and $k=1$. In particular, \eqref{1*} is true for $n=1$.
	
For any $n\in\mathbb{N}$ and $k=1,2,\ldots, n$, let 
	$G_n^{(k)}$ be the determinant on the right hand side of \eqref{1*}. Take a fixed $n\in\mathbb{N}$ and assume
	\begin{equation}
		\delta_m^{(k)}=(-1)^kG_m^{(k)}, \quad k=1,2,\ldots, m,
		\label{4}
	\end{equation}
	for each 
	$m=1,2,\ldots, n-1$. 
	We want to prove that \eqref{4} holds also for $m=n$. In fact, expanding 
	$G_n^{(k)}$
	when $k\leq n$ 
	along its last column and taking into account \eqref{4},
	$$
	G_n^{(k)}=    G_{n-1}^{(k-1)}b_{n,n-1}-G_{n-2}^{(k-2)}b_{n,n-2}+\cdots+(-1)^kG_{n-k+1}^{(1)}b_{n,n-k+1}+(-1)^{k+1}(\lambda_{n-k}-\lambda_n)b_{n,n-k}.$$
	Therefore
	$$
	(-1)^kG_n^{(k)}=-\left[\delta_{n-1}^{(k-1)}b_{n,n-1}+\delta_{n-2}^{(k-2)}b_{n,n-2}+\cdots+\delta_{n-k+1}^{(1)}b_{n,n-k+1}+(\lambda_{n-k}-\lambda_n)b_{n,n-k}\right]
	$$
	because 
	$k-i\leq n-i$ for $i=1,2,\ldots ,k-1$. From this and \eqref{matriz} we arrive to \eqref{4} for $m=n$, as we wanted to prove.
\end{proof}

Given any monic polynomial
$
Q_n(x)=x^n+q_{n,n-1}x^{n-1}+\cdots +q_{n,1}x+q_{n,0}
$
we define
\begin{equation}
\label{nuevo}
\begin{array}{lll}
\Delta_{k,n,s}(Q_n)=\qquad&&\\
\end{array}
\end{equation}
$$
\left|
\begin{array}{cccccc}
{n-k \choose s-1}q_{n-k+1,n-k}& \left[
{n-k \choose s-1}+{n-k+1 \choose s-1}
\right]q_{n-k+2,n-k}&\cdots&\cdots  &\left[
{n-k\choose s-1}+\cdots +{n-1 \choose s-1}
\right]q_{n,n-k}\\ 
1& q_{n-k+2,n-k+1}&\cdots &\cdots& q_{n,n-k+1}\\ 
0 & 1&   \cdots& \cdots &q_{n,n-k+2} &\\
\vdots  & 0  & \ddots &&\vdots \\
\vdots  &\vdots & \ddots &\ddots &\vdots \\
0 & 0 &\cdots & 1& q_{n,n-1}
 \end{array}
\right|.
$$
Furthermore, we denote
\begin{equation}
\label{otro_nuevo}
\Delta_{k,n,s}:=\Delta_{k,n,s}(P_n),\quad \Delta^{(1)}_{k,n,s}:=\Delta_{k,n,s}(P_n^{(1)})
\end{equation}

As an immediate consequence of Lemmas \ref{lema2bis} and \ref{lema3}, we obtain the following.

\begin{lem}
\label{lema4}
For $n\in \mathbb{N}$ and $k=1,2,\ldots , n$ we have
$$
(-1)^{k+1}\delta_n^{(k)}=\sum_{s=1}^n \Delta_{k,n,s}s!a_{s,s}.
$$
\end{lem}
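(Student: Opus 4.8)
The plan is to combine Lemma \ref{lema3} with Lemma \ref{lema2bis} via the multilinearity of the determinant. Write $G_n^{(k)}$ for the determinant on the right-hand side of \eqref{1*}, so that Lemma \ref{lema3} reads $G_n^{(k)}=(-1)^k\delta_n^{(k)}$. The key observation is that $G_n^{(k)}$ and the determinant $\Delta_{k,n,s}=\Delta_{k,n,s}(P_n)$ from \eqref{nuevo}--\eqref{otro_nuevo} agree in every row except the first: below the first row both determinants carry exactly the same entries $b_{\,\cdot,\,\cdot}$ together with the ones on the subdiagonal. Hence the whole argument reduces to comparing the two first rows, and the problem is essentially one of rewriting the eigenvalue differences that appear in $G_n^{(k)}$.

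Next I would rewrite the first row of $G_n^{(k)}$ using Lemma \ref{lema2bis}. For $c=1,\ldots,k$ the entry in column $c$ of \eqref{1*} is $(\lambda_{n-k}-\lambda_{n-k+c})\,b_{n-k+c,n-k}$, and applying \eqref{autos} with $i=n-k$ and $j=n-k+c$ gives
$$
\lambda_{n-k}-\lambda_{n-k+c}=-\sum_{s=1}^{n}\left[\binom{n-k}{s-1}+\binom{n-k+1}{s-1}+\cdots+\binom{n-k+c-1}{s-1}\right]s!\,a_{s,s},
$$
where the summation index, which originally runs only up to $n-k+c$, may be extended to $n$: for $s>n-k+c$ every binomial in the bracket has lower entry $s-1$ strictly larger than its upper entry and therefore vanishes. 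The bracketed coefficient of $s!\,a_{s,s}$ is precisely the column-$c$ entry of the first row of $\Delta_{k,n,s}$ as prescribed in \eqref{nuevo}.

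The final step is to expand the determinant along its first row by linearity. Since the first row of $G_n^{(k)}$ is the linear combination $-\sum_{s=1}^n s!\,a_{s,s}\,(\text{first row of }\Delta_{k,n,s})$ while all lower rows coincide, multilinearity yields $G_n^{(k)}=-\sum_{s=1}^n s!\,a_{s,s}\,\Delta_{k,n,s}$. Combining this with $G_n^{(k)}=(-1)^k\delta_n^{(k)}$ and multiplying by $-1$ produces the claimed identity $(-1)^{k+1}\delta_n^{(k)}=\sum_{s=1}^n\Delta_{k,n,s}\,s!\,a_{s,s}$. I expect the only delicate point to be the bookkeeping in the second step: one must verify that the binomial-sum coefficients produced by \eqref{autos} line up exactly, column by column and with the correct overall sign, with the entries prescribed in \eqref{nuevo}, and that raising each column's summation range to the common upper limit $n$ introduces no spurious terms.
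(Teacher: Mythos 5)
Your proof is correct and is exactly the argument the paper has in mind: the paper states Lemma \ref{lema4} as an ``immediate consequence'' of Lemmas \ref{lema2bis} and \ref{lema3}, and your write-up supplies precisely that deduction --- rewriting the first row of the determinant $G_n^{(k)}$ via \eqref{autos}, noting that the extension of the summation range to $n$ is harmless by the vanishing-binomial convention, and concluding by multilinearity of the determinant in the first row. The sign bookkeeping ($G_n^{(k)}=-\sum_{s=1}^n s!\,a_{s,s}\,\Delta_{k,n,s}$ combined with $(-1)^k\delta_n^{(k)}=G_n^{(k)}$) is handled correctly.
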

Moreover, we prove in the following lemma that
$
\Delta^{(1)}_{k,n,s}$
and 
$
\Delta_{k,n,s}$
in \eqref{otro_nuevo} are related.

\begin{lem}
\label{lema5}
With the above notation,
$$
\Delta^{(1)}_{k,n,s}=\Delta_{k,n,s}+\sum_{j=0}^{k-1}(-1)^j {{n-k+j} \choose {s-1}}b_{n-k+j,n-k}\left[
\sum_{r=1}^{k-j}\gamma_{n-k+j+1}\cdots \gamma_{n-k+j+r} E_{k,n,j+r-1}
\right],
$$
where
$E_{k,n,j}$ 
is defined in \eqref{otro17}.
\end{lem}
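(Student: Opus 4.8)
The plan is to regard $\Delta^{(1)}_{k,n,s}$ and $\Delta_{k,n,s}$ as determinants of two $k\times k$ matrices of the same shape, the first built from the coefficients $b^{(1)}_{m,i}=b_{m,i}+\gamma_m b_{m-1,i}$ of $P^{(1)}_n$ and the second from the $b_{m,i}$, and to compare them column by column. Denote by $C_1,\dots,C_k$ the columns of the matrix defining $\Delta_{k,n,s}=\Delta_{k,n,s}(P_n)$, where column $c$ collects the coefficients of $P_{n-k+c}$ together with the subdiagonal $1$ sitting in row $c+1$. Since every entry appearing in these matrices is off-leading (the second indices are always strictly below the degree), the substitution $b^{(1)}_{m,i}=b_{m,i}+\gamma_m b_{m-1,i}$ shows that the $c$-th column of the matrix for $\Delta^{(1)}_{k,n,s}$ equals $C_c+\gamma_{n-k+c}D_c$, where $D_c$ is the column built in the same way from the coefficients of $P_{n-k+c-1}$.

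The crucial observation is that $D_c$ is almost a previous column of the very same matrix. Using $b_{m,m}=1$ for the entry in row $c$ and telescoping the binomial weights $\binom{n-k}{s-1}+\cdots+\binom{n-k+c-1}{s-1}$ appearing in the first row, I would establish the identity $D_c=C_{c-1}+w_c\,e_1$, with the convention $C_0=0$, where $e_1$ is the first coordinate vector and $w_c=\binom{n-k+c-1}{s-1}b_{n-k+c-1,n-k}$; note $w_{j+1}=\binom{n-k+j}{s-1}b_{n-k+j,n-k}$, exactly the factor occurring in the statement. Consequently each column of the perturbed matrix splits into three pieces, $C_c^{(1)}=C_c+\gamma_{n-k+c}C_{c-1}+\gamma_{n-k+c}w_c\,e_1$: a \emph{stay} piece $C_c$, a \emph{shift} piece $\gamma_{n-k+c}C_{c-1}$ reproducing the previous column, and an $e_1$ piece.

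Next I would expand $\Delta^{(1)}_{k,n,s}$ by multilinearity in the columns. A term is obtained by choosing one of the three pieces in each column, and since the $C_c$ carry a subdiagonal of $1$'s, a term can survive only if the chosen columns are pairwise distinct vectors. The stay/shift staircase then forces a rigid pattern: column $1$ must stay (because $C_0=0$), staying propagates to the right once it begins, and at most one column may supply $e_1$. One checks that the only nonvanishing choices are indexed by a pair $(j,r)$ with $0\le j\le k-1$ and $1\le r\le k-j$, namely: columns $1,\dots,j$ stay, column $j+1$ supplies $e_1$, the consecutive block $j+2,\dots,j+r$ shifts, and columns $j+r+1,\dots,k$ stay. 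The accompanying scalar is precisely $\binom{n-k+j}{s-1}b_{n-k+j,n-k}\,\gamma_{n-k+j+1}\cdots\gamma_{n-k+j+r}$, which matches both the $\gamma$-product and the coefficient in the statement.

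It then remains to evaluate the determinant attached to each surviving pattern, namely $\det\big[\,C_1|\cdots|C_j|e_1|C_{j+1}|\cdots|C_{j+r-1}|C_{j+r+1}|\cdots|C_k\,\big]$, and to show it equals $(-1)^j E_{k,n,j+r-1}$. Here I would use two successive Laplace expansions along leading blocks of columns, exploiting that $C_1,\dots,C_j,e_1$ are supported in the first $j+1$ rows and that the shift columns $C_{j+1},\dots,C_{j+r-1}$ form a triangular staircase: the first expansion yields the sign $(-1)^j$ (the $e_1$ column, expanded along its single nonzero entry in row $1$, multiplies a unit-triangular block), the second yields $1$, and the remaining bottom-right block is, entry by entry, exactly the determinant $E_{k,n,j+r-1}$ of \eqref{otro17}. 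Adding the all-stay term $\Delta_{k,n,s}$ to the sum over $(j,r)$ then gives the asserted formula. The main obstacle is the combinatorial vanishing argument together with the careful tracking of the two Laplace signs; the identity $D_c=C_{c-1}+w_c e_1$ is precisely what makes the expansion telescope. As a sanity check, for $k=1$ one has $b^{(1)}_{n,n-1}=b_{n,n-1}+\gamma_n$, which reproduces the single correction term $\binom{n-1}{s-1}\gamma_n$, with $E_{1,n,0}$ the empty determinant equal to $1$.
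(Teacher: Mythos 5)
Your argument is correct, and it organizes the computation genuinely differently from the paper, although both rest on the same underlying mechanism: multilinearity of the determinant in its columns applied to the substitution $b^{(1)}_{m,i}=b_{m,i}+\gamma_m b_{m-1,i}$. The paper works recursively: it first expands $\Delta^{(1)}_{k,n,s}$ along the first row --- that is where it exploits the telescoping of the binomial weights --- obtaining $\Delta^{(1)}_{k,n,s}=\sum_{j=0}^{k-1}(-1)^j\binom{n-k+j}{s-1}D^{(1)}_{k,n,j}$; it then splits only the first column of each minor $D^{(1)}_{k,n,j}$, giving $D^{(1)}_{k,n,j}=D_{k,n,j}+\gamma_{n-k+j+1}b_{n-k+j,n-k}E^{(1)}_{k,n,j}$ (the extra terms produced by splitting later columns die because they are proportional to earlier columns), and finally unrolls the one-step recursion $E^{(1)}_{k,n,j}=E_{k,n,j}+\gamma_{n-k+j+2}E^{(1)}_{k,n,j+1}$ to produce the sum over $r$. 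You instead keep the full $k\times k$ determinant throughout, encode the binomial telescoping once and for all in the identity $D_c=C_{c-1}+w_c e_1$ (which the paper never states explicitly, though it is implicitly behind both its row expansion and its vanishing determinants), and perform a single global multilinear expansion, classifying the surviving stay/shift/$e_1$ patterns by the pair $(j,r)$ and evaluating each surviving determinant via two Laplace expansions. Your route makes the indexing $(j,r)$, the coefficient $\binom{n-k+j}{s-1}b_{n-k+j,n-k}$, the $\gamma$-product and the sign $(-1)^j$ all appear simultaneously from one expansion; the price is the combinatorial vanishing argument, which the paper's sequential two-term splittings avoid. Both routes need the same conventions $C_0=0$ and $E_{k,n,k-1}=1$, and both evaluate the same block-triangular determinants at the end. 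One phrasing caveat: ``staying propagates to the right once it begins'' is not literally true --- a stay may be followed by the $e_1$ column and then by a block of shifts, which is exactly your $(j,r)$ pattern --- but the enumeration of admissible patterns you actually give is the correct one, so this is only a matter of wording, not a gap.
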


\begin{proof}
Let us start denoting
$D_{k,n,k-1}^{(1)}=b_{n,n-k}^{(1)}$
and

	$$D_{k,n,j}^{(1)}=\left|\begin{array}{ccccc}
		b_{n-k+j+1,n-k}^{(1)} & b_{n-k+j+2,n-k}^{(1)} & \cdots & \cdots & b_{n,n-k}^{(1)}\\
		1& b_{n-k+j+2,n-k+j+1}^{(1)}&\cdots &\cdots& b_{n,n-k+j+1}^{(1)}\\ 
		0 & 1&   \cdots& \cdots &b_{n,n-k+j+2}^{(1)} \\
		\vdots  & 0  & \ddots &&\vdots \\
		\vdots  &\vdots & \ddots &\ddots &\vdots \\
		0 & 0 &\cdots & 1& b_{n,n-1}^{(1)}	
	\end{array}\right|, \quad j=0,1,\ldots ,k-2.
$$
If we expand the determinant that define $\Delta_{k,n,s}^{(1)}$ (see \eqref{nuevo}-\eqref{otro_nuevo}) along its first row, it is easy to see that 
	\begin{equation}
		\label{eq-aux-3}
		\Delta_{k,n,s}^{(1)}=\sum_{j=0}^{k-1}(-1)^j\binom{n-k+j}{s-1}D_{k,n,j}^{(1)}.
	\end{equation}
	Notice that each determinant $D_{k,n,j}^{(1)}$ is of order $k-j$. Furthermore, \eqref{999} implies that
$$
b_{n,i}^{(1)}=b_{n,i}+\gamma_nb_{n-1,i},\quad n\in \mathbb{N},\quad i=0,1,\ldots, n
$$
(where
$
b_{n,n}^{(1)}=b_{n,n}=1,\,b_{n-1,n}=0
$). Using this equality in the first column of $D_{k,n,j}^{(1)}$ and expanding this determinant as the sum of two determinants, we obtain
	\begin{align}
		\label{eq-aux-1}	
		D_{k,n,j}^{(1)}&=
\left|\begin{array}{cccccc}
					b_{n-k+j+1,n-k}& b_{n-k+j+2,n-k}^{(1)} & \cdots & \cdots & b_{n,n-k}^{(1)}\\
					1& b_{n-k+j+2,n-k+j+1}^{(1)}&\cdots &\cdots& b_{n,n-k+j+1}^{(1)}\\ 
					0 & 1&   \cdots& \cdots &b_{n,n-k+j+2}^{(1)} \\
					\vdots  & 0  & \ddots &&\vdots \\
					\vdots  &\vdots & \ddots &\ddots &\vdots \\
					0 & 0 &\cdots & 1& b_{n,n-1}^{(1)}	
				\end{array}\right|\nonumber\\\nonumber\\
		&+\gamma_{n-k+j+1}b_{n-k+j,n-k}
\left|\begin{array}{ccccc}
					b_{n-k+j+2,n-k+j+1}^{(1)}&\cdots &\cdots& b_{n,n-k+j+1}^{(1)}\\ 
					1&   \cdots& \cdots &b_{n,n-k+j+2}^{(1)} \\
					0  & \ddots &&\vdots \\
					\vdots & \ddots &\ddots &\vdots \\
					0 &\cdots & 1& b_{n,n-1}^{(1)}	
				\end{array}\right|
	\end{align}
	Repeating this procedure in the second column of the first addend on the right hand side of \eqref{eq-aux-1},	
	\begin{align*}D_{k,n,j}^{(1)}&=
\left|\begin{array}{cccccc}
					b_{n-k+j+1,n-k}& b_{n-k+j+2,n-k} & \cdots & \cdots & b_{n,n-k}^{(1)}\\
					1& b_{n-k+j+2,n-k+j+1}&\cdots &\cdots& b_{n,n-k+j+1}^{(1)}\\ 
					0 & 1&   \cdots& \cdots &b_{n,n-k+j+2}^{(1)} \\
					\vdots  & 0  & \ddots &&\vdots \\
					\vdots  &\vdots & \ddots &\ddots &\vdots \\
					0 & 0 &\cdots & 1& b_{n,n-1}^{(1)}	
				\end{array}\right|\\\\
&+\gamma_{n-k+j+1}b_{n-k+j,n-k}
\left|\begin{array}{ccccc}
					b_{n-k+j+2,n-k+j+1}^{(1)}&\cdots &\cdots& b_{n,n-k+j+1}^{(1)}\\ 
					1&   \cdots& \cdots &b_{n,n-k+j+2}^{(1)} \\
					0  & \ddots &&\vdots \\
					\vdots & \ddots &\ddots &\vdots \\
					0 &\cdots & 1& b_{n,n-1}^{(1)}	
				\end{array}\right|
	\end{align*}
because such addend can be expressed as the addition of two determinants of which the second one is zero. Iterating the procedure we see that the first addend on the right hand side of \eqref{eq-aux-1} is
	$$D_{k,n,j}:=\left|\begin{array}{cccccc}
		b_{n-k+j+1,n-k} & b_{n-k+j+2,n-k}& \cdots & \cdots & b_{n,n-k}\\
		1& b_{n-k+j+2,n-k+j+1}&\cdots &\cdots& b_{n,n-k+j+1}\\ 
		0 & 1&   \cdots& \cdots &b_{n,n-k+j+2} \\
		\vdots  & 0  & \ddots &&\vdots \\
		\vdots  &\vdots & \ddots &\ddots &\vdots \\
		0 & 0 &\cdots & 1& b_{n,n-1}
	\end{array}\right|,$$
	(We denote this term by $D_{k,n,j}$ because it has the same structure than $D_{k,n,j}^{(1)}$ but switching $b_{ij}^{(1)}$ by $b_{ij}$.) Going back to \eqref{eq-aux-1}, we denote the determinant of the second addend on the right hand side as $E_{k,n,j}^{(1)}$, following the similarity with the determinant $E_{k,n,j}$ defined in \eqref{otro17}. Then, we have proved
	\begin{equation}
		\label{eq-I}
		D_{k,n,j}^{(1)}=D_{k,n,j}+\gamma_{n-k+j+1}b_{n-k+j,n-k}E_{k,n,j}^{(1)},
	\end{equation}
	where $E_{k,k,k-1}^{(1)}=1$. Now, we can repeat with the second addend in \eqref{eq-I} the same idea that we used with $D_{k,n,j}$. That is, at each step we select a column, express all the coefficients $b_{ij}^{(1)}$ from that column as $b_{ij}^{(1)}=b_{ij}+\gamma_ib_{i-1,j}$ and split the determinant in two following the addition given by the former expression. Applying this process to the first column and recalling that $b_{n-k+j+1,n-k+j+1}=1$, we can express $E_{k,n,j}^{(1)}$ as
	\begin{equation}
		\label{eq-aux-2}
		E_{k,n,j}^{(1)}=E_{k,n,j}+\gamma_{n-k+j+2}E_{k,n,j+1}^{(1)}, \quad j=0,1,\dots,k-1.
	\end{equation} 
	Applying \eqref{eq-aux-2} to $E_{k,n,j+1}^{(1)}$ we can rewrite \eqref{eq-aux-2} itself as
	\begin{align*}
		E_{k,n,j}^{(1)}&=E_{k,n,j}+\gamma_{n-k+j+2}E_{k,n,j+1}+\gamma_{n-k+j+2}\gamma_{n-k+j+3}E_{k,n,j+2}^{(1)}\\
		=\cdots&=\sum_{r=1}^{k-j}(\gamma_{n-k+j+2}\gamma_{n-k+j+3}\cdots\gamma_{n-k+j+r})E_{k,n,j+r-1},
	\end{align*}
	where $\gamma_{n-k+j+2}\gamma_{n-k+j+3}\cdots\gamma_{n-k+j+r}=1$ if $r=1$, and $E_{k,n,k-1}=1$.
	Using this expression in \eqref{eq-I} we reach
\begin{equation}\label{otro42}
	D_{k,n,j}^{(1)}=D_{k,n,j}+b_{n-k+j,n-k}\sum_{r=1}^{k-j}(\gamma_{n-k+j+1}\gamma_{n-k+j+2}\gamma_{n-k+j+3}\cdots\gamma_{n-k+j+r})E_{k,n,j+r-1}.
\end{equation}
	Finally, we use \eqref{otro42} to rewrite \eqref{eq-aux-3} as
	\begin{align*}
	\Delta_{k,n,s}^{(1)}&=\sum_{j=0}^{k-1}(-1)^j\binom{n-k+j}{s-1}D_{k,n,j}\\
	&+\sum_{j=0}^k(-1)^j\binom{n-k+j}{s-1}b_{n-k+j,n-k}\left[\sum_{r=1}^{k-j}(\gamma_{n-k+j+1}\cdots\gamma_{n-k+j+r})E_{k,n,j+r-1}\right].
	\end{align*}
Since the first addend is exactly $\Delta_{k,n,s}$, the proof concludes here.
	\end{proof}

\noindent
{\bf Proof of Theorem \ref{teorema1}:}

Using \eqref{1} and \eqref{operador_nuevo}, we recall (see \eqref{2}) 
$$
\lambda_n=\sum_{i=1}^{n} 
{n \choose i} i!a_{i,i}=\sum_{i=1}^{n} 
{n \choose i} i!a^{(1)}_{i,i},\quad n\in \mathbb{N}.
$$
Then, taking 
$n=1,2,\ldots,$
we check
$
a_{i,i}=a_{i,i}^{(1)}
$
for each 
$i\in \mathbb{N}$. 

On the other hand, defining 
$$
\widetilde\delta^{(k)}_{n}=\sum_{i=k}^n {n \choose i}i!a^{(1)}_{i,i-k},\quad k=0,1,\ldots,n,
$$
(see \eqref{deltas_aes}) and applying Lemma \ref{lema4} to 
$L^{(1)}$, we obtain
$$
\widetilde\delta^{(k)}_{n} = (-1)^{k+1}\sum_{s=1}^n \Delta^{(1)}_{k,n,s}s!a_{s,s}.
$$
Then, using Lemma \ref{lema5} and again Lemma \ref{lema4}, 
\begin{eqnarray*}
\widetilde\delta^{(k)}_{n} &=&\\
=\delta^{(k)}_{n}&+&(-1)^{k+1}\sum_{s=1}^n \sum_{j=0}^{k-1}(-1)^j {{n-k+j} \choose {s-1}}b_{n-k+j,n-k}
\left[\sum_{r=1}^{k-j}\gamma_{n-k+j+1}\cdots \gamma_{n-k+j+r} E_{k,n,j+r-1}
\right]s!a_{s,s}\\
=\delta^{(k)}_{n}&+&(-1)^{k+1}\sum_{j=0}^{k-1}(-1)^j\left[\sum_{s=1}^n  {{n-k+j} \choose {s-1}}s!a_{s,s}\right]b_{n-k+j,n-k}
\sum_{r=1}^{k-j}\gamma_{n-k+j+1}\cdots \gamma_{n-k+j+r} E_{k,n,j+r-1}.
\end{eqnarray*}
Since
$
\{P_n^{(1)}
\}
$
is the sequence of eigenpolynomials for 
$L^{(1)}$,
then
$
\widetilde\delta^{(k)}_{n} =0$
for 
$n\geq k>\widetilde N$. 
Also,
$
\delta^{(k)}_{n} =0$ for
$n\geq k > N$
(see \eqref{deltas_aes}). Therefore, we arrive to \eqref{la_del_teorema1}.
\hfill $\square$

\section{Darboux transformations on the sequence of Hermite polynomials}\label{seccion4}
Theorem \ref{teorema1} gives a necessary condition for the Darboux transformation to produce a new sequence of eigenpolynomials for some finite order differential operator with the same sequence of eigenpolynomials as $L$. In this section we show that \eqref{la_del_teorema1} is not verified in the case of Hermite polynomials. This proves that Darboux transformations, even in the case of classical orthogonal polynomials, may not lead to new families of eigenfunctions.

With the purpose to analyze Theorem \ref{teorema1} when 
$
\{P_n(x)\}=\{H_n(x)\}$ 
are the Hermite polynomials, we study the sequence 
$\{\gamma_i\}$
that defines
$
\{P^{(1)}_n(x)\}=\{H^{(1)}_n(x)\}$ 
in this case. 
\begin{teo}
\label{teorema2_2}
Let 
$\{H_n(x)\}$ and $\{H^{(1)}_n(x)\}$ 
be sequences that satisfy \eqref{999}, that is,
\begin{equation}
\label{999_H}
H_n^{(1)}(x)=H_n(x)+\gamma_nH_{n-1}(x).
\end{equation}
Then 
$\{H_n^{(1)}(x)
\}$
satisfies a $(p+2)$-term recurrence relation as \eqref{otro9} if and only if \eqref{999_H} is a Geronimus transformation of
$\{H_n(x)\}$. In that case, $p=1$ and we have 
\begin{equation}
\label{otro43}
\gamma_m=\gamma_2+\frac{1}{2\gamma_1}-\frac{m-1}{2\gamma_{m-1}},\quad m=2,3,\ldots 
\end{equation}
\end{teo}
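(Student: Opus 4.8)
The plan is to reduce the statement, via the general machinery of Section~\ref{seccion3}, to an explicit bidiagonal factorization of the tridiagonal Hermite matrix $J$ in \eqref{matrizHermite} and then to read off the $\gamma_m$ by comparing matrix entries.

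First I would recall the discussion preceding \eqref{3636}: writing the transformation \eqref{999_H} as $v^{(1)}=Tv$ with $T$ as in \eqref{defT}, the transformed sequence $\{H_n^{(1)}\}$ satisfies a $(p+2)$-term recurrence relation as \eqref{otro9} precisely when $TJT^{-1}$ is a banded Hessenberg matrix, which (since $T$ is lower bidiagonal) happens if and only if $T=L^{(1)}$ for a bidiagonal factorization $J-CI=UL^{(1)}$ of $J$; in that event $TJT^{-1}=J^{(1)}$ is the first Geronimus transformation. As $J$ is tridiagonal, this factorization has a single lower factor, and $J^{(1)}=CI+L^{(1)}U$ is again tridiagonal (a product of a lower and an upper bidiagonal matrix); hence the transformed polynomials again satisfy a three-term recurrence. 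This simultaneously yields the equivalence ``$(p+2)$-term recurrence $\Leftrightarrow$ Geronimus transformation'', forces $p=1$, and identifies $T=L^{(1)}$.

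It then remains to make the factorization $J-CI=UT$ explicit. Taking $U$ upper bidiagonal with diagonal $u_0,u_1,\ldots$ and superdiagonal equal to $1$, and $T$ as in \eqref{defT} with unit diagonal and subdiagonal $\gamma_1,\gamma_2,\ldots$, a direct multiplication gives $(UT)_{i,i-1}=u_i\gamma_i$, $(UT)_{i,i}=u_i+\gamma_{i+1}$ and $(UT)_{i,i+1}=1$. Matching these against the Hermite data $(J-CI)_{i,i-1}=i/2$, $(J-CI)_{i,i}=-C$ and $(J-CI)_{i,i+1}=1$ (the superdiagonal matching automatically) produces, for $i\geq 1$, the two relations $u_i\gamma_i=i/2$ and $u_i+\gamma_{i+1}=-C$. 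Eliminating $u_i=i/(2\gamma_i)$ leaves the first-order recurrence $\gamma_{i+1}=-C-i/(2\gamma_i)$; evaluating it at $i=1$ identifies the constant $-C=\gamma_2+1/(2\gamma_1)$, and substituting this back and renaming $m=i+1$ gives exactly \eqref{otro43}, with $\gamma_1$ and $\gamma_2$ (equivalently $\gamma_1$ and the shift $C$) playing the role of the free parameters of the Geronimus transformation.

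The step I expect to be the main obstacle is the bookkeeping needed to argue rigorously that the tridiagonal structure of $J$ admits no bidiagonal Darboux factorization with more than one lower factor, so that $p=1$ is genuinely forced and not merely one possibility; a convenient way around this is to observe directly that for bidiagonal $T$ the matrix $TJT^{-1}$ is always upper Hessenberg with a single superdiagonal, and that its lower part is banded only when $T=L^{(1)}$, being full otherwise. Once $T=L^{(1)}$ is established, the entry comparison and the resulting recurrence for $\gamma_m$ are routine.
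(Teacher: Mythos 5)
Your proposal follows the same skeleton as the paper's proof: pass to $TJT^{-1}$ with $T$ as in \eqref{defT}, show that bandedness forces tridiagonality and $T=L^{(1)}$, identify the result with the Geronimus transformation \eqref{3636}, and extract the recursion for the $\gamma_m$. The one place where you genuinely depart from the paper is the derivation of \eqref{otro43}: you match entries in the factorization $J-CI=UT$ (getting $u_i\gamma_i=i/2$ and $u_i+\gamma_{i+1}=-C$, then eliminating $u_i$), whereas the paper instead imposes that the second subdiagonal of $TJT^{-1}$ vanishes. Your route is slightly cleaner and makes the roles of the free parameters $C$ and $\gamma_1$ transparent; both yield $\gamma_m+\frac{m-1}{2\gamma_{m-1}}=\gamma_2+\frac{1}{2\gamma_1}$.

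The weakness is that the step you set aside as ``bookkeeping'' is in fact the mathematical core of the theorem, and your proposal does not supply it. The statement allows an arbitrary $(p+2)$-term recurrence, so every $p\neq 1$ must be excluded; the general discussion following \eqref{otro36bis} cannot be cited for this, since it only shows that a three-term recurrence (the case matching the single lower factor of the tridiagonal $J$) is equivalent to $T=L^{(1)}$, and says nothing about whether $TJT^{-1}$ could be, say, $5$-banded without being tridiagonal. The paper does this work by computing the subdiagonals of $TJT^{-1}$ explicitly: for entries at depth $s\geq 3$ below the diagonal one gets the propagation identity \eqref{otroIV}, $\delta_{r+s,r}=-\gamma_{r+1}\delta_{r+s,r+1}$, so a nonzero entry on the second subdiagonal forces every entry to its left in the same row to be nonzero (the $\gamma$'s being nonzero); this is incompatible with a genuine $(p+2)$-band structure, $\delta_{n,n-p}\neq 0$ and $\delta_{n,n-p-1}=0$, for any $p\geq 2$. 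The case $p=0$ needs a separate argument: if the first subdiagonal vanished identically, i.e. $\gamma_{n}(\gamma_{n}-\gamma_{n+1})=\frac{n}{2}$ for all $n$, then the second subdiagonal entry would reduce to $\delta_{r+2,r}=\gamma_{r+2}\frac{r+1}{2}\neq 0$, again contradicting bandedness. Your parenthetical claim that the lower part of $TJT^{-1}$ is ``banded only when $T=L^{(1)}$, being full otherwise'' is exactly what these computations establish, so your outline is correct as far as it goes; but to be a proof it must include them (or an equivalent propagation argument), since this is where essentially all of the difficulty of the theorem resides.
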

\begin{proof}
Let $T$ and $J$ be the matrices defined in \eqref{defT} and \eqref{matrizHermite}, respectively. 
Then, $TJT^{-1}$
is a Hessenberg matrix, that is, 
$$
TJT^{-1}=
\left(
\begin{array}{rcccccccccc}
\delta_{0,0}&1&0&\cdots  \\ 
 \delta_{1,0}&\delta_{1,1}&1&0&\cdots \\ 
\vdots  & \vdots &\ddots&  \ddots&  \ddots&\\
\delta_{s,0}& \delta_{s,1}&& \delta_{s,s}&1&0&\cdots \\ 
\vdots &\vdots &&\vdots &\ddots &\ddots&  \ddots&\\
 \end{array}
\right)
$$
where
$
\delta_{r+s,r}=\gamma_{r+s}\alpha_{r+s-1,r}+\alpha_{r+s,r}-\gamma_{r+1}\delta_{r+s,r+1}
$
for
$r,\,s=0,1,\ldots,$
and we understand 
$
\delta_{r,r+1}=1.
$
Here $\alpha_{i,j}$ represent the entries
of
$J$,
which are given in \eqref{matrizHermite}. Thus, the main diagonal of $TJT^{-1}$ is given as
$
\delta_{r,r}=\gamma_r-\gamma_{r+1},\, r=0,1,\ldots
$
(assuming 
$\gamma_0=0$). The first subdiagonal is 
$$
\delta_{r+1,r}=\frac{r+1}{2}-\gamma_{r+1}\delta_{r+1,r+1}=\frac{r+1}{2}-\gamma_{r+1}\left(\gamma_{r+1}-\gamma_{r+2}\right),\quad r=0,1,\ldots
$$
The second subdiagonal is
\begin{eqnarray}
\label{otroIII}
\delta_{r+2,r}&=&\gamma_{r+2}\frac{r+1}{2}-\gamma_{r+1}\frac{r+2}{2}+\gamma_{r+1}\gamma_{r+2}\left(\gamma_{r+2}-\gamma_{r+3}\right),\quad r=0,1,\ldots,\nonumber
\end{eqnarray}
and the following subdiagonals are 
\begin{equation}
\label{otroIV}
\delta_{r+s,r}=-\gamma_{r+1}\delta_{r+s,r+1},\quad r=0,1,\ldots, \quad s\geq 3.
\end{equation}
As in \eqref{otro36}, we see that
$$
\left( 
TJT^{-1}-xI
\right)
\left(
\begin{array}{c}
H_0^{(1)}(x)\\
H_1^{(1)}(x)\\
\vdots
\end{array}
\right)=0.
$$
If 
$TJT^{-1}$ is the banded matrix corresponding to a $(p+2)$-term recurrence relation for some $p\in \mathbb{N}$ then 
\begin{equation}
\label{otroV}
\sum_{k=n-p}^{n-1}\delta_{n,k} H^{(1)}_k(x)+\left( \delta_{n,n}-x \right)H^{(1)}_n(x)+H^{(1)}_{n+1}(x)=0,\quad n=0,1,\ldots,
\end{equation}
with
$
\delta_{n,n-p}\neq 0$ for $n=p,p+1,\ldots 
$ That is,
$
\delta_{n,n-p-1}= 0$ for $n=p,p+1,\ldots ,
$
and also 
\begin{equation}
\label{contrario}
\delta_{n,n-p-s}=0,\ \text{for}\ n=p,p+1,\ldots,\, s\geq 1. 
\end{equation}
Under these conditions
$p\geq 1$, 
because 
$p=0$ 
implies
$
\delta_{n,n-1}=0,$
that is,
$
\frac{n}{2}=\gamma_n(\gamma_n-\gamma_{n+1}),\, n=1,2,\ldots,
$
and then we would have 
$
\delta_{n,n-2}\neq 0
$
in \eqref{otroIII}, which contradicts \eqref{contrario}.

On the other hand, \eqref{otroIV} indicates that it is not possible to have
$
\delta_{n,n-p-1}= 0,
$
if
$
\delta_{n,n-p}\neq 0
$
for $n=p+1,p+2,\ldots$, $p\geq 2$. 
Consequently, 
$p=1$ and \eqref{otroV} is a three-term recurrence relation. 

Let
$J=CI+UL$
be a Geronimus factorization of $J$ (see \eqref{otro36bis}). Since $TJT^{-1}$ and $TU$ are tridiagonal matrices, $LT^{-1}$ must be a diagonal matrix. In addition, the entries in the diagonals of 
$L$
and
$T^{-1}$
coincide, so we  have 
$L=T$. Therefore,
$
TJT^{-1}=CI+LU
$
is a Darboux (Geronimus) transformation of $J$.

Reciprocally, if \eqref{999_H} is a Geronimus transformation of 
$
\{H_n(x)\}
$
obviously 
$
TJT^{-1}=CI+TU
$
is a tridiagonal matrix and the polynomials
$
\{H^{(1)}_n(x)\}
$
satisfy a three-term recurrence relation. 

Finally, we assume that \eqref{999_H} holds. Then \eqref{otroIII} provides
$$
0=\gamma_{r+2}\frac{r+1}{2}-\gamma_{r+1}\frac{r+2}{2}+\gamma_{r+1}\gamma_{r+2}\left(\gamma_{r+2}-\gamma_{r+3}\right),\quad r=0,1,\ldots,
$$
that is,
\begin{equation}
\label{otroVI}
\gamma_{r+3}=\gamma_{r+2}+\frac{r+1}{2\gamma_{r+1}}-\frac{r+2}{2\gamma_{r+2}}\quad r=0,1,\ldots
\end{equation}
Note that \eqref{otroVI} is equivalent to 
$$
\gamma_{m}+\frac{m-1}{2\gamma_{m-1}}=\gamma_{2}+\frac{1}{2\gamma_{1}},\quad m=3,4,\ldots,
$$
as we wanted to prove. 
\end{proof}

We recall that in a Geronimus factorization we need to fix two parameters. In fact, firstly we choose 
$C\in \mathbb{C}$ such that there exists the 
$UL$
factorization of
$
J-CI.
$ 
Then, 
$UL$ 
depends on a second parameter. This implies that in \eqref{otroVI} the sequence 
$
\{\gamma_i\}
$
is determined in terms of
$
\gamma_1,\,\gamma_2.
$
In the sequel we assume 
$$
\gamma_{2}+\frac{1}{2\gamma_{1}}=0.
$$
Thus, since \eqref{otro43}, if \eqref{999_H} is a Geronimus transformation of 
$
\{H_n\}
$
we have 
\begin{equation}
\label{otro**}
\gamma_m\gamma_{m+1}=-\frac{m}{2},\quad m=1,2,\ldots,
\end{equation}
and it is easy to check
\begin{equation}
\label{otro*}
\left.
\begin{array}{lcc}
\gamma_{2m} & = & -\displaystyle\frac{(2m-1)(2m-3)\cdots 5\cdot 3}{(2m-2)(2m-4)\cdots 4\cdot 2}\frac{1}{2\gamma_1}\\\\
\gamma_{2m+1} & = & \displaystyle\frac{2m(2m-2)\cdots 4\cdot 2}{(2m-1)(2m-3)\cdots 5\cdot 3}\gamma_1
\end{array}
\right\},\quad m=1,2,\ldots
\end{equation}
(writing $(2m-2)(2m-4)\cdots 4\cdot 2=1,\, (2m-1)(2m-3)\cdots 5\cdot 3=1,$
when $m=1$).

In the case of Hermite polynomials, 
$a_{11}=-2, a_{ii}=0, \forall i\geq 2$. 
Moreover, the coefficients of the polynomials were given in \eqref{31tilde}. Then \eqref{la_del_teorema1} becomes
$
-2\Sigma_H(n,k),
$
where
\begin{equation}
	\label{eq-IIa}
	\Sigma_H(n,k):=\sum_{\substack{j=0\\ j\ \text{even}}}^{k-1}
	\frac{(-1)^{j/2}}{2^j}\frac{(n-k+j)!}{(n-k)!(j/2)!}\sum_{r=1}^{k-j}\gamma_{n-k+j+1}\ldots \gamma_{n-k+j+r} E_{k,n,j+r-1}.
\end{equation}
For the rest of this section, our goal will be to prove that \eqref{la_del_teorema1} is not satisfied. In order to do this, we will show that 
$
\Sigma_H(n,k)\neq 0 
$
for $n,\,k$
under the conditions of Theorem \ref{teorema1}. 

To compute the right hand side of \eqref{eq-IIa},  we will use the following result.
\begin{lem}
\label{lema8}
Let us define
$$
S_M(m)=\sum_{r=0}^M\left(-\frac{1}{4}\right)^r\frac{(2m+2r-1)!}{r!(m+r-1)!}{m-1+M\choose m-1+r},\quad m,\,M\in \mathbb{N}.
$$
Then 
\begin{eqnarray}
S_M(1) & = &\displaystyle\frac{(2\cdot 1-3)(2\cdot 2-3)\cdots (2M-3)}{M!2^M}\label{BBB}\\
S_M(m)& =&\displaystyle\frac{(2m-1)!}{(m-1)!}S_M(1) \label{AA} 
\end{eqnarray}
for $n,\,M\in \mathbb{N}$. In particular we have
$
S_M(m)\neq 0 
$
for all
$m,\,M\in \mathbb{N}.$
\end{lem}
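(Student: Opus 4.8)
We must establish two closed-form identities for the sum
$$S_M(m)=\sum_{r=0}^M\left(-\tfrac14\right)^r\frac{(2m+2r-1)!}{r!(m+r-1)!}\binom{m-1+M}{m-1+r},$$
namely the base case \eqref{BBB} giving $S_M(1)$ explicitly, and the general reduction \eqref{AA} expressing $S_M(m)$ as $\frac{(2m-1)!}{(m-1)!}S_M(1)$, from which nonvanishing is immediate.

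**Overall plan.** The cleanest route is to prove \eqref{AA} first, reducing everything to $m=1$, and then to evaluate the one-parameter family $S_M(1)$ directly. To prove \eqref{AA} I would fix $M$ and proceed by induction on $m$, seeking a recurrence that relates $S_M(m+1)$ to $S_M(m)$. The key is to manipulate the $r$-th summand: writing $\frac{(2m+2r-1)!}{(m+r-1)!}=(2m+2r-1)(2m+2r-2)\cdots$ and comparing the summand of $S_M(m+1)$ with that of $S_M(m)$ term by term, the ratio of consecutive factorial factors should produce a factor like $\frac{(2m+1)(2m)}{\,m\,}=2(2m+1)$ up to binomial adjustments; accumulating these factors across the induction gives the product $\frac{(2m-1)!}{(m-1)!}=(2m-1)(2m-2)\cdots 1\big/(m-1)!$, which is exactly the claimed prefactor. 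The binomial coefficient $\binom{m-1+M}{m-1+r}$ shifts to $\binom{m+M}{m+r}$ when $m\mapsto m+1$, and I expect a Pascal-type splitting $\binom{m+M}{m+r}=\binom{m-1+M}{m+r}+\binom{m-1+M}{m+r-1}$ to be the device that lets the induction close after reindexing the sum.

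**Evaluating the base case.** For \eqref{BBB} I would treat $S_M(1)=\sum_{r=0}^M\left(-\tfrac14\right)^r\frac{(2r+1)!}{r!\,r!}\binom{M}{r}$ as a hypergeometric or generating-function sum. The factor $\frac{(2r+1)!}{r!r!}=(2r+1)\binom{2r}{r}$ suggests using the central binomial generating identity together with the finite binomial $\binom{M}{r}$; concretely I would try to recognize the sum as a terminating ${}_2F_1$ evaluated at argument $1$ (or $-1$) and apply a Gauss/Chu–Vandermonde-type summation, or alternatively prove \eqref{BBB} by a second induction on $M$ using Pascal's rule and checking the single-step ratio equals $\frac{2M-3}{2M}$, which matches the telescoping product $\prod_{j=1}^M\frac{2j-3}{2j}$ on the right-hand side of \eqref{BBB}. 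The nonvanishing conclusion then follows because $\frac{(2m-1)!}{(m-1)!}\neq0$ and, for $S_M(1)$, the product $(2\cdot1-3)(2\cdot2-3)\cdots(2M-3)$ has its only zero factor at $j=1$ where $2j-3=-1\neq0$ and at no integer $j\ge1$ does $2j-3$ vanish, so every factor is a nonzero odd integer.

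**Expected main obstacle.** The routine part is the factorial bookkeeping; the genuine difficulty is finding the correct single-step recurrence that drives each induction, since the interaction between the falling-factorial factor $\frac{(2m+2r-1)!}{(m+r-1)!}$ and the binomial $\binom{m-1+M}{m-1+r}$ does not separate cleanly. I anticipate that the reindexing needed after the Pascal split in the proof of \eqref{AA} will require care to keep the alternating signs $(-1/4)^r$ aligned, and that the base-case summation \eqref{BBB} may be easier to verify by guessing the closed form (as telescoping) and confirming the ratio of successive $S_M(1)/S_{M-1}(1)$ than by forcing a hypergeometric identity. If the direct induction on $m$ for \eqref{AA} proves recalcitrant, a fallback is to prove both \eqref{BBB} and \eqref{AA} simultaneously by a joint induction on $M$, exploiting that both sides of \eqref{AA} are linear in the same hypergeometric structure.
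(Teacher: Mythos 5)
Your plan for \eqref{AA} coincides with the paper's proof: the Pascal split $\binom{m-1+M}{m-1+r}=\binom{m-2+M}{m-1+r}+\binom{m-2+M}{m-2+r}$, followed by the reindexing $r\mapsto r+1$ in one of the two resulting sums, produces exactly the one-step recurrence $S_M(m)=2(2m-1)\,S_M(m-1)$ (the factor $2(2m+1)$ you anticipate), and telescoping down to $m=1$ yields the prefactor $\frac{(2m-1)!}{(m-1)!}$. The paper packages the telescoping by writing $S_M(m)=\frac{(2m-1)!}{(m-1)!}f(m,M)$ and observing that the recurrence forces $f$ to be independent of $m$, but this is the same argument.

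For \eqref{BBB}, your two suggested routes are not equally viable. The hypergeometric one works and is genuinely different from the paper's: since $\left(-\frac14\right)^r\frac{(2r+1)!}{(r!)^2}\binom{M}{r}=\frac{(-M)_r\,(3/2)_r}{(1)_r\,r!}$, the sum is the terminating series ${}_2F_1(-M,3/2;1;1)$, and Chu--Vandermonde gives $\frac{(1-3/2)_M}{(1)_M}=\frac{(-1)\cdot 1\cdot 3\cdots(2M-3)}{M!\,2^M}$, which is \eqref{BBB} verbatim; this is arguably shorter than what the paper does. By contrast, the route you declare ``easier'' --- induction on $M$ by confirming the single-step ratio $S_M(1)/S_{M-1}(1)=\frac{2M-3}{2M}$ --- has a genuine gap. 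One application of Pascal's rule does not relate $S_M(1)$ to $S_{M-1}(1)$: after splitting and recombining one obtains (this is the paper's \eqref{333})
\begin{equation*}
S_M(1)=-\frac12\sum_{r=0}^{M-1}\left(-\frac14\right)^r\frac{(2r+1)!}{(r+1)!\,r!}\binom{M-1}{r},
\end{equation*}
and the sum on the right is a new object --- the denominator $(r!)^2$ has become $(r+1)!\,r!$ --- not a multiple of $S_{M-1}(1)$, because the discrepancy weight $\frac{1}{r+1}$ depends on $r$. So the induction does not close on the one-parameter family $\{S_M(1)\}$; the ratio $\frac{2M-3}{2M}$ is true but cannot be verified one step at a time. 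This is precisely why the paper strengthens the induction hypothesis to the two-parameter family $\frac{(2\cdot 1-3)\cdots(2q-3)}{2^q}\sum_{r=0}^{M-q}\left(-\frac14\right)^r\frac{(2r+1)!}{(r+q)!\,r!}\binom{M-q}{r}$ and inducts on the auxiliary index $q$ (their \eqref{444}), with \eqref{BBB} emerging only at the terminal step $q=M$, where the sum collapses to the single term $\frac{1}{M!}$. So either commit to the Chu--Vandermonde route, or be prepared to generalize the induction hypothesis as the paper does.
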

\begin{proof}
Using
$$
{m-1+M\choose m-1+r}={m-2+M \choose m-1+r} +{m-2+M \choose m-2+r}
$$
it is easy to check
\begin{equation}
\label{otro1}
S_M(m)=2(2m-1)S_M(m-1),\quad m=1,2,\ldots, \,M=0,1,\ldots 
\end{equation}
Now, for each $m,\,M $ we write
\begin{equation}
\label{otro2}
S_M(m)=\frac{(2m-1)!}{(m-1)!}f(m,M)
\end{equation}
and since \eqref{otro1} we see 
$
f(m-1,M)=f(m,M).
$
This means that 
$
f(M):=f(m,M)
$
does not depend on $m$. Furthermore, from \eqref{otro2} it is
$S_M(1)=f(M)$
and \eqref{otro2} becomes \eqref{AA}.

On the other hand, writting
\begin{equation}
\label{222}
{M\choose r}={M-1 \choose r-1} +{M-1 \choose r},\quad r=0,1,\ldots,M
\end{equation}
(where we assume 
$
{M-1 \choose -1}={M-1 \choose M}=0
$
)
and substituting this expression in
$$
S_M(1)=\sum_{r=0}^M\left(-\frac{1}{4}\right)^r\frac{(2r+1)!}{(r!)^2}{M\choose r},
$$
we have
\begin{eqnarray}
\label{333}
S_M(1)&=&\sum_{r=1}^M\left(-\frac{1}{4}\right)^r\frac{(2r+1)!}{(r!)^2}{M-1\choose r-1}+\sum_{r=0}^{M-1}\left(-\frac{1}{4}\right)^r\frac{(2r+1)!}{(r!)^2}{M-1\choose r}\\
&=& \sum_{r=0}^{M-1}\left(-\frac{1}{4}\right)^r\frac{(2r+1)!}{(r!)^2}\left(-\frac{1}{2(r+1)} \right){M-1\choose r}=
 -\frac{1}{2}\sum_{r=0}^{M-1}\left(-\frac{1}{4}\right)^r\frac{(2r+1)!}{(r+1)!r!}{M-1\choose r}.\nonumber
\end{eqnarray}
We will show
\begin{equation}
\label{444}
S_M(1)=\frac{(2\cdot 1-3)(2\cdot 2-3)\cdots (2\cdot q-3)}{2^q}
\sum_{r=0}^{M-q}\left(-\frac{1}{4}\right)^r\frac{(2r+1)!}{(r+q)!r!}{M-q\choose r},\,q=1,\ldots ,M.
\end{equation}
In fact, \eqref{333} is \eqref{444} for $q=1$. Proceeding by induction, and considering 
$$
{M-q\choose r}={M-q-1 \choose r-1} +{M-q-1 \choose r}
$$
as in \eqref{222}, we obtain \eqref{444} for $q+1$ when we assume that \eqref{444} is satisfied for $q$. In particular, for $q=M$ in \eqref{444} we arrive to \eqref{BBB}.
\end{proof}

Also with the aim to study \eqref{eq-IIa}, in the following lemma we analyze the determinant $E_{k,n,j}$ defined in \eqref{otro17}.

\begin{lem}
	\label{lema6}
	If $E_{k,n,j+r-1}$ is considered over the Hermite polynomials, then
	$$E_{k,n,j+r-1} =\begin{cases}
		0, & \text{ if}\ k-j-r\ \text{is odd,}\\
		\displaystyle\frac{n!}{(n-k+j+r)!((k-j-r)/2)!2^{k-j-r}}, & \text{ if}\ k-j-r\ \text{is even.}
	\end{cases}$$
\end{lem}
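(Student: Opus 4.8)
The plan is to read the matrix in \eqref{otro17} (with $s=j+r-1$) as a Hessenberg-type determinant of fixed order and evaluate it by a single cofactor recursion. First I would set $a:=n-k+j+r$ and note that $E_{k,n,j+r-1}$ is a determinant of order $m:=k-j-r$, so that $a=n-m$ and the largest index occurring is $a+m=n$. Writing $E_m(a)$ for this determinant, its $(i,c)$ entry is $b_{a+c,\,a+i-1}$ when $i\le c$, equals $1$ when $i=c+1$, and vanishes when $i>c+1$. The structural input from the Hermite case is \eqref{31tilde}: $b_{p,q}=0$ unless $p-q$ is even, and otherwise $b_{p,q}=\frac{(-1)^{(p-q)/2}}{2^{p-q}}\frac{p!}{q!\,((p-q)/2)!}$. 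In particular the first-row entries $b_{a+c,a}$ vanish for odd $c$.

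Next I would expand $E_m(a)$ along its first row to get the recursion $E_m(a)=\sum_{c=1}^m(-1)^{1+c}b_{a+c,a}\,E_{m-c}(a+c)$, with $E_0\equiv 1$. The point to verify is that the minor $M_{1,c}$ obtained by deleting the first row and the $c$-th column factors as $1\cdot E_{m-c}(a+c)$: after the deletion the block on rows $2,\dots,c$ and columns $1,\dots,c-1$ is upper triangular with unit diagonal (its diagonal is made of the subdiagonal $1$'s of the original matrix), the block on rows $c+1,\dots,m$ and columns $1,\dots,c-1$ vanishes since every such entry lies strictly below the subdiagonal, and the complementary block on rows and columns $c+1,\dots,m$ is exactly the determinant of the same type of order $m-c$ with $a$ replaced by $a+c$. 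This makes the deleted matrix block lower triangular, so $M_{1,c}=E_{m-c}(a+c)$.

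Then I would run an induction on $m$, using that only even $c=2t$ contribute. The base case is $E_0=1$; for odd $m$ every surviving term has $m-c$ odd and hence vanishes by induction (the smallest odd instance being $E_1(a)=b_{a+1,a}=0$), giving $E_m(a)=0$ and the first case of the lemma. For even $m$, inserting the inductive value $E_{m-2t}(a+2t)=\frac{n!/(a+2t)!}{((m-2t)/2)!\,2^{m-2t}}$ together with $b_{a+2t,a}=\frac{(-1)^t}{2^{2t}}\frac{(a+2t)!}{a!\,t!}$ collapses the sum to $-\frac{n!}{2^m a!}\sum_{t=1}^{m/2}\frac{(-1)^t}{t!\,(m/2-t)!}$; this last sum equals $-\frac{1}{(m/2)!}$ by the elementary identity $\sum_{t=0}^{u}(-1)^t\binom{u}{t}=0$, and recalling $a=n-m$ yields the claimed value $\frac{n!}{(n-m)!\,(m/2)!\,2^{m}}$.

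The main obstacle is the bookkeeping in the second step: confirming that the interior column deletion in this Hessenberg-type determinant really produces the clean block-triangular factorization $M_{1,c}=E_{m-c}(a+c)$, and keeping the index shifts and parities aligned so the induction closes for both parities of $m$ at once. Once that reduction is in place, the rest is just the single alternating binomial identity. Conceptually, the vanishing for odd $m$ also follows immediately from the fact that $E_m(a)$ has nonzero entries only between indices of opposite parity, so the matrix is block anti-diagonal and singular whenever the two blocks fail to be square; I would nevertheless use the uniform recursion to treat both cases with one argument.
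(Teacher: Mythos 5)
Your proposal is correct, but it follows a genuinely different route from the paper's proof. The paper works directly with the explicit matrix: it uses \eqref{31tilde} to display the checkerboard zero pattern, expands repeatedly along the odd-indexed columns, and in the odd-order case lands on a determinant with a zero column (hence $0$), while in the even-order case it reduces to a dense determinant of half the size, pulls a common denominator out of each row and a common numerator out of each column, and evaluates the remaining determinant (with entries of the form $(-1)^i/i!$) by a last-column expansion. You instead set up a single self-similar recursion: expanding $E_m(a)$ along its first row and checking that each minor $M_{1,c}$ is block triangular with an upper unitriangular block, so that $M_{1,c}=E_{m-c}(a+c)$, you get
\begin{equation*}
E_m(a)=-\sum_{t\geq 1}b_{a+2t,a}\,E_{m-2t}(a+2t),
\end{equation*}
and then a strong induction on $m$ treats both parities at once, the even case collapsing to the identity $\sum_{t=0}^{u}(-1)^t\binom{u}{t}=0$. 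Your minor factorization, index bookkeeping, and sign computations all check out (one harmless slip: the minor is block \emph{upper} triangular, since the zero block sits in the bottom-left corner, but the determinant identity you use is the same). What your approach buys is uniformity and economy: no large explicit matrices, one recursion, one elementary binomial identity. What the paper's approach buys is explicitness: the intermediate half-size determinant exhibits concretely how the Hermite coefficients \eqref{31tilde} enter, at the cost of heavier bookkeeping and a separate treatment of the two parities.
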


\begin{proof}
The order of the determinant $E_{k,n,j+r-1}$ is $k-j-r$. In the first place, if $k-j-r$ is odd then $E_{k,n,j+r-1}$ has an odd number of columns. 
Taking into account \eqref{31tilde},
$E_{k,n,j+r-1}=$ 
$$=\left|\begin{array}{cccccc}
	0 & -\frac{1}{2^2}\frac{(n-k+j+r+2)!}{(n-k+j+r+1)!} & 0 & \frac{1}{2^4}\frac{(n-k+j+r+4)!}{(n-k+j+r)!2!} & \cdots & 0\\
	1 & 0 & -\frac{1}{2^2}\frac{(n-k+j+r+3)!}{(n-k+j+r+1)!} & 0 & \cdots & \frac{(-1)^{\frac{k-j-r-1}{2}}}{2^{k-j-r-1}}\frac{n!}{(n-k+j+r+1)!((k-j-r-1)/2)!}\\
	0 & \ddots & \ddots & \ddots &  & \vdots\\
	\vdots & \ddots & \ddots & \ddots & \ddots & \vdots\\
	\vdots & \ddots & \ddots & \ddots & \ddots & \vdots\\
	\vdots &  & 0 & 1 & 0 & -\frac{1}{2^2}\frac{(n-1)!}{(n-3)!}\\
	0 & \cdots & \cdots & 0 & 1 & 0
\end{array}\right|$$
Expanding this determinant each time along their first 
$(k-j-r-1)/2$
odd columns we obtain a determinant of order 
$(k-j-r+1)/2$
whose entries in the last column are zero. Then
$E_{k,n,j+r-1}=0$, which proves the case $k-j-r$ odd.

In the second place, if $k-j-r$ is even then
$E_{k,n,j+r-1}=$
$$=\left|\begin{array}{ccccccc}
	0 & \frac{-1}{2^2}\frac{(n-k+j+r+2)!}{(n-k+j+r)!} & 0 & \frac{1}{2^4}\frac{(n-k+j+r+4)!}{(n-k+j+r)!2!}& \cdots& \frac{(-1)^{\frac{k-j-r}{2}}}{2^{k-j-r}}\frac{n!}{(n-k+j+r)!((k-j-r)/2)!}\\
	1 & 0 & \frac{-1}{2^2}\frac{(n-k+j+r+3)!}{(n-k+j+r+1)!} & 0 & \cdots & 0\\
	0 & \ddots & \ddots & \ddots & & \vdots\\
	\vdots & \ddots & \ddots & \ddots &  \ddots & \vdots\\
	\vdots & & & 1 &  0 & -\frac{1}{2^2}\frac{n!}{(n-2)!}\\
	0 & \cdots & \cdots & \cdots & 1 & 0
\end{array}\right|$$
Expanding each time along the odd columns,
$E_{k,n,j+r-1}=$
$$=(-1)^{\frac{k-j-r}{2}}\left|\begin{array}{ccccc}
	\frac{-1}{2^2}\frac{(n-k+j+r+2)!}{(n-k+j+r)!} & \frac{1}{2^4}\frac{(n-k+j+r+4)!}{(n-k+j+r)!2!} & \cdots&\frac{(-1)^{\frac{k-j-r}{2}}}{2^{k-j-r}}\frac{n!}{(n-k+j+r)!((k-j-r)/2)!}\\
	1 & \frac{-1}{2^2}\frac{(n-k+j+r+4)!}{(n-k+j+r+2)!} & \cdots & \frac{(-1)^{\frac{k-j-r-2}{2}}}{2^{k-j-r-2}}\frac{n!}{(n-k+j+r+2)!((k-j-r-2)/2)!}\\
	 & 1 & \   & \vdots\\
	 &  & \ddots & \vdots\\
	 & &  & -\frac{1}{2^2}\frac{n!}{(n-2)!}	
\end{array}\right|.$$
Taking firstly a common denominator out of each row and then a common numerator out of each column, we have, after simplifying
$$E_{k,n,j+r-1}=\frac{(-1)^{\frac{k-j-r}{2}}n!}{2^{k-j-r}(n-k+j+r)!}
\left|
\begin{array}{cccccc}
	-1 &
	\frac{1}{2!} & -\frac{1}{3!}&\cdots& \cdots&
	\frac{(-1)^{\frac{k-j-r}{2}}}{((k-j-r)/2)!}\\[3mm]
	1 & -1 & \frac{1}{2!}&\cdots & \cdots& \frac{(-1)^{\frac{k-j-r-2}{2}}}{((k-j-r+2)/2)!}\\[3mm]
	0&1& -1& \ddots &   & \frac{(-1)^{\frac{k-j-r-4}{2}}}{((k-j-r+4)/2)!}\\[3mm]
	&0 &1& -1 & \ddots &\frac{(-1)^{\frac{k-j-r-6}{2}}}{((k-j-r+6)/2)!}\\
	& & \ddots& \ddots & \ddots & \vdots\\
	& && 0 & 1 & -1
\end{array}\right|.$$
Furthermore, it is easy to prove that the value of the determinant on the right hand side is
$
\frac{(-1)^{\frac{k-j-r}{2}}}{\left(\frac{k-j-r}{2}\right)!}
$
(expanding the determinant along its last column). With this, the case $k-j-r$ even is also proved.
\end{proof}

Using the above lemmas, the expression of $\Sigma_H(n,k)$ in \eqref{eq-IIa} is transformed into
\begin{equation}
		\label{eq-IIb}
\Sigma_H(n,k)=	\sum_{\substack{j=0\\ j\ \text{even}}}^{k-1}\frac{(-1)^{j/2}}{2^j}\frac{(n-k+j)!}{(n-k)!(j/2)!}\sum_{\substack{s=0\\ s\ \text{even}}}^{k-j-1}\gamma_{n-k+j+1}\cdots\gamma_{n-s}\frac{n!}{(n-s)!(s/2)!2^s}.
\end{equation}
If 
$
\{H^{(1)}_n\}
$
were a sequence of eigenpolynomials for some differential operator 
$L^{(1)}$ 
with the same sequence  
$\{-2n\}$
of eigenvalues corresponding to $\{H_n\}$ then Theorem \ref{teorema1} would imply that 
$
\Sigma_H(n,k)=0$
for some 
$\widetilde N\in \mathbb{N},\,\widetilde N\geq 2,$
and
$n\geq k>\widetilde N$ (see \eqref{operador_nuevo}). However, in the following theorem we prove that the former necessary condition does not hold.

\begin{teo}
Let
$n$ be even and $k$ be odd, $n\geq k\geq 3$. Then
$
\Sigma_H(n,k)\neq 0$.
\end{teo}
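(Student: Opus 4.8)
The plan is to evaluate $\Sigma_H(n,k)$ in \eqref{eq-IIb} in closed form and recognize it as a nonzero multiple of the quantity $S_M(m)$ from Lemma \ref{lema8}. First I would dispose of the products of consecutive $\gamma$'s. Writing $j=2p$ and $s=2q$ (both even), the product $\gamma_{n-k+j+1}\cdots\gamma_{n-s}$ runs over $k-j-s=k-2p-2q$ consecutive factors, an \emph{odd} number precisely because $n$ is even and $k$ is odd. Pairing the factors two at a time by means of $\gamma_m\gamma_{m+1}=-m/2$ from \eqref{otro**} leaves exactly one unpaired factor $\gamma_{n-2q}$, whose index is even; by \eqref{otro*} this factor is a nonzero rational multiple of $1/\gamma_1$. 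Consequently each product is an explicit rational number times $1/\gamma_1$, so that $\Sigma_H(n,k)$ is $1/\gamma_1$ times a purely rational quantity; the free parameter $\gamma_1$ factors out and only the rational part must be shown to be nonzero.

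Next I would carry out the factorial simplification. After substituting the closed forms, several cancellations occur: the factorial $(n-2q)!$ coming from the leftover $\gamma_{n-2q}$ cancels the $(n-s)!$ in the denominator of \eqref{eq-IIb}, and the powers of $2$ combine into an overall factor $2^{-n}$ independent of $q$. The inner sum over $q$ then reduces to a partial alternating binomial sum, which I would evaluate with the identity $\sum_{q=0}^{Q}(-1)^q\binom{a}{q}=(-1)^Q\binom{a-1}{Q}$, collapsing it to a single binomial coefficient $\binom{n/2-1}{K-p}$ with $K=(k-1)/2$. What remains is a single sum over $p$.

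Finally I would match this surviving $p$-sum, term by term, against the definition of $S_M(m)$ in Lemma \ref{lema8}. Setting $r=p$, the exponent and factorial bookkeeping forces $m=(n-k+1)/2$ and $M=(k-1)/2$, and the binomial $\binom{n/2-1}{K-p}$ equals $\binom{m-1+M}{m-1+r}$ after using $\binom{a}{b}=\binom{a}{a-b}$. Hence $\Sigma_H(n,k)$ equals a nonzero explicit prefactor (built from $n!$, $(n/2)!$, $(n-k)!$, $2^n$, and $\gamma_1$) times $S_M(m)$. Because $n>k\geq 3$, both $m\geq 1$ and $M\geq 1$, so Lemma \ref{lema8} guarantees $S_M(m)\neq 0$; as every prefactor is nonzero, $\Sigma_H(n,k)\neq 0$. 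I expect the main obstacle to be the precise factorial-and-sign bookkeeping in the last two steps---in particular, confirming that the inner $q$-sum collapses to exactly this binomial coefficient and that the outer $p$-sum reproduces $S_M(m)$ with the correct parameters $m$ and $M$, rather than a shifted variant.
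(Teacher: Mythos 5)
Your proposal is correct and follows essentially the same route as the paper's own proof: pair consecutive $\gamma$'s via \eqref{otro**} so that a single even-indexed factor proportional to $1/\gamma_1$ survives (by \eqref{otro*}), collapse the inner alternating sum with the partial binomial identity to $\binom{n/2-1}{(k-1)/2-p}$, and identify the remaining sum as $S_{\frac{k-1}{2}}\bigl(\frac{n-k+1}{2}\bigr)$, which is nonzero by Lemma \ref{lema8}. The only cosmetic difference is that you leave the last factor $\gamma_{n-2q}$ unpaired whereas the paper leaves the first factor $\gamma_{n-k+j+1}$ unpaired; both choices yield the same final expression and the same parameters $m=\frac{n-k+1}{2}$, $M=\frac{k-1}{2}$.
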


\begin{proof}
Let $n$ and $k$ be fixed numbers that satisfy the hypothesis. Then,
\eqref{eq-IIb} can be rewritten as
\begin{equation}
\Sigma_H(n,k)=\frac{n!}{(n-k)!}\sum_{\substack{j=0\\ j\ \text{even}}}^{k-1}\frac{(-1)^{j/2}}{2^j(j/2)!}\sum_{\substack{s=0\\ s\ \text{even}}}^{k-j-1}\left(\frac{\gamma_{n-k+j+1}}{n-k+j+1}\right)\left(\frac{\gamma_{n-k+j+2}}{n-k+j+2}\right)\cdots\left(\frac{\gamma_{n-s}}{n-s}\right)\frac{1}{2^s(s/2)!}
\label{sigma}
\end{equation}
where 
$
\left(\frac{\gamma_{n-k+j+1}}{n-k+j+1}\right)\left(\frac{\gamma_{n-k+j+2}}{n-k+j+2}\right)\cdots\left(\frac{\gamma_{n-s}}{n-s}\right)$
has an odd number $k-j-s$ of factors that can be grouped as
$$
\frac{\gamma_{n-k+j+1}}{n-k+j+1}\left(\frac{\gamma_{n-k+j+2}\gamma_{n-k+j+3}}{(n-k+j+2)(n-k+j+3)}
\right)\cdots \left(\frac{\gamma_{n-s-1}\gamma_{n-s}}{(n-s-1)(n-s)}
\right).$$
Due to \eqref{otro**}, the above product is equal to
$$
\frac{\gamma_{n-k+j+1}}{(n-k+j+1)F(j,s)}\left(-\frac{1}{2}\right)^{\frac{k-j-s-1}{2}},
$$
where
$
F(j,s):=(n-k+j+3)(n-k+j+5)\cdots(n-s).
$
Substituting in \eqref{sigma} and using this notation, 
\begin{eqnarray*}
\Sigma_H(n,k)&=&
\frac{n!}{(n-k)!}\left(-\frac{1}{2}\right)^{\frac{k-1}{2}}\sum_{\substack{j=0\\ j\ \text{even}}}^{k-1}\frac{1}{2^{j/2}(j/2)!}\left(\frac{\gamma_{n-k+j+1}}{n-k+j+1}\right)
\sum_{\substack{s=0\\ s\ \text{even}}}^{k-j-1}\frac{(-1)^{s/2}}{2^{s/2}(s/2)!F(j,s)}\\
& = & \frac{n!}{(n-k)!}\left(-\frac{1}{2}\right)^{\frac{k-1}{2}}\sum_{r=0}^{\frac{k-1}{2}}\frac{\gamma_{n-k+2r+1}}{2^r r!}
\sum_{q=0}^{\frac{k-1}{2}-r}\frac{(-1)^q}{2^q q!F(2r-2,2q)}.
\end{eqnarray*}
Since $n$ is even and $k$ is odd, necessarily $n\geq k+1$. Hence, applying \eqref{otro*},
$$
\gamma_{n-k+2r+1}=-\frac{(n-k+2r)(n-k+2r-2)\cdots 5\cdot 3}{(n-k+2r-1)(n-k+2r-3)\cdots 4\cdot 2}\left(\frac{1}{2\gamma_1}\right)=
-\frac{F(k-n,k-2r)}{2\gamma_1F(k-n-1,k-2r+1)}.
$$
Moreover,
$$
F(k-n-1,k-2r+1)F(2r-2,2q)=F(k-n-1,2q)=2^{\frac{n}{2}-q}\left(\frac{n}{2}-q\right)!
$$
and
$$
F(k-n,k-2r)=\frac{(n-k+2r)!}{2^{\frac{n-k-1}{2}+r}\left(\frac{n-k-1}{2}+r\right)!}.
$$
Therefore,
\begin{equation}
\label{ultima}
\Sigma_H(n,k)= \frac{n!}{\gamma_1(n-k)!2^n}
\sum_{r=0}^{\frac{k-1}{2}}\frac{(n-k+2r)!}{2^{2r}\left(\frac{n-k-1}{2}+r\right)! r!}
\sum_{q=0}^{\frac{k-1}{2}-r}\frac{(-1)^{q+1}}{q!\left(\frac{n}{2}-q\right)!}.
\end{equation}
On the other hand, it is easy to see, using induction on $s$, that
$$
\sum_{i=0}^{m-s}(-1)^i{m \choose i}=(-1)^{m-s}{m-1 \choose s-1},\quad 1\leq s\leq m.
$$
Considering this and 
$$
\sum_{q=0}^{\frac{k-1}{2}-r}\frac{(-1)^q}{q!\left(\frac{n}{2}-q\right)!}=\frac{1}{(n/2)!}\sum_{q=0}^{\frac{k-1}{2}-r}(-1)^q{n/2 \choose q},
$$
and using the notation of \eqref{BBB}-\eqref{AA}, in \eqref{ultima} we have
\begin{eqnarray*}
\Sigma_H(n,k)&=& \frac{n!(-1)^{\frac{k+1}{2}}}{\gamma_1(n/2)!(n-k)!2^{n}}
\sum_{r=0}^{\frac{k-1}{2}}\frac{(-1)^r(n-k+2r)!}{2^{2r}\left(\frac{n-k-1}{2}+r\right)! r!}
{\frac{n}{2}-1 \choose \frac{n-k-1}{2}+r }\\
&=&
\frac{n!(-1)^{\frac{k+1}{2}}}{\gamma_1(n/2)!(n-k)!2^{n}}S_{\frac{k-1}{2}}\left(\frac{n-k+1}{2}\right)\neq 0
\end{eqnarray*}
as we wanted to prove.
\end{proof}

\section{Conclusions }\label{seccion5}
In this paper, the sequences 
$
\{
\delta_n^{(k)}
\},\, n\in \mathbb{N},\, k=1,2,\ldots,n,
$
are introduced, which are an important connection between the coefficients of the polynomials 
$\{P_n\}$
 and those of 
$
\{a_n\}
$ defining a differential operator
$L$ 
as in \eqref{1}.
In the first place, given a differential operator $L$, this connection allows, under certain conditions, to guarantee the existence and uniqueness of their eigenvalues and eigenvectors, at the same time that it leads to the explicit expression of such eigenvalues and eigenfunctions.
In the second place, we have derived a necessary condition for a linear transformation to provide a new family of eigenpolynomials for some finite order operator. 
Finally, we have tested this necessary condition for the particular case of Geronimus transformations applied to Hermite polynomials, which led us to the conclusion that Geronimus transformed of Hermite eigenpolynomials are not eigenfunctions of any new differential operator.

\vspace{.5cm}
\noindent
{\bf Declaration of competing interest}

\vspace{.3cm}
There is no competing interest.

\end{document}